\newcommand\cyr{%
\renewcommand\rmdefault{wncyr}%
\renewcommand\cFdefault{wncyss}%
\renewcommand\encodingdefault{OT2}%
\normalfont \selectfont} \DeclareTextFontCommand{\textcyr}{\cyr}
\newcommand{\be}{\begin{equation}}
\newcommand{\ee}{\end{equation}}
\newcommand{\bes}{\begin{equation*}}
\newcommand{\ees}{\end{equation*}}
\newcommand{\N}{\mathbb{N}}
\newcommand{\R}{\mathbb{R}}
\newcommand{\X}{\mathbb{X}}
\newcommand{\Z}{\mathbb{Z}}
\newcommand{\cB}{\mathcal{B}}
\newcommand{\cF}{\mathcal{F}}
\newcommand{\cW}{\mathcal{W}}
\newcommand{\sF}{\mathscr{F}}
\newcommand{\sA}{\mathscr{A}}
\newcommand{\sfB}{\mathsf{B}}
\newcommand{\sfE}{\mathsf{E}}
\newcommand{\sfF}{\mathsf{F}}
\newcommand{\sfP}{\mathsf{P}}
\newcommand{\sfU}{\mathsf{U}}
\newcommand{\sfV}{\mathsf{V}}
\newcommand{\sfX}{\mathsf{X}}
\newcommand{\sfY}{\mathsf{Y}}
\newcommand{\cC}{\mathcal{C}}
\newcommand{\fF}{\mathfrak{F}}
\newcommand{\nl}{\vskip 10pt\noindent}
\newcommand{\oU}{\overline{\sfU}}
\renewcommand{\rmdefault}{cmr} 
\newtheorem{theorem}{Theorem}[section]
\theoremstyle{plain}
\newtheorem{definition}{Definition}[section]
\newtheorem{example}{Example}[section]
\newtheorem{lemma}{Lemma}[section]
\newtheorem{proposition}{Proposition}[section]
\newtheorem{remark}{Remark}[section]
\newtheorem{remarks}{Remarks}[section]
\newtheorem*{notation*}{Notation}
\numberwithin{equation}{section}
\DeclareMathOperator\Lip{Lip}
\DeclareMathOperator*\esssup{{ess\, sup}}
\DeclareMathOperator\gr{graph}
\DeclareMathOperator\Diff{Diff}
\newcommand{\sH}{\mathscr{H}}
\newcommand{\n}[1]{{\left\|{#1}\right\|}}
\newcommand{\abs}[1]{\left\vert{#1}\right\vert}
\newcommand{\floor}[1]{\left\lfloor{#1}\right\rfloor}
\newcommand{\vertiii}[1]{{\left\vert\kern-0.25ex\left\vert\kern-0.25ex\left\vert #1 
    \right\vert\kern-0.25ex\right\vert\kern-0.25ex\right\vert}}
\newcommand{\vep}{\varepsilon}
\newcommand{\one}{1\hspace{-0.23em}\mathrm{l}}
\begin{document}

\title[Fractal Interpolation over Nonlinear Partitions]{Fractal Interpolation over \\Nonlinear Partitions}
\author{Peter R. Massopust}
\address{Centre of Mathematics, Technical University of Munich, Boltzmannstrasse 3, 85747
Garching b. M\"unchen, Germany}
\email{massopust@ma.tum.de}

\begin{abstract}
This paper introduces the fractal interpolation problem defined over domains with a  nonlinear partition. This setting generalizes known methodologies regarding fractal functions and provides a new holistic approach to fractal interpolation. In this context, perturbations of nonlinear partition functions are considered and sufficient conditions for the existence of a unique solution of the underlying fractal interpolation problem for some classes of function spaces are given.
\vskip 12pt\noindent
\textbf{Keywords and Phrases:} Iterated function system (IFS), attractor, fractal interpolation, Read-Bajraktarevi\'{c} operator, fractal function, perturbation, Lebesgue-Bochner space
\vskip 6pt\noindent
\textbf{AMS Subject Classification (2010):} 28A80, 37L30, 46B25, 46E30
\end{abstract}

\maketitle
\section{Introduction}
We introduce the following notation and terminology: $\N$ denotes the set of positive natural numbers, $\N_0 := \N\cup\{0\}$, and $\N_n := \{1, \ldots, n\}$ the initial segment of $\N$ of length $n$.

This paper appertains to the Fractal Interpolation Problem: Given a bounded subset  
$\sfX$ of a Banach space $\sfE$ and a Banach space $\sfF$, construct a global function $\psi:\sfX = \coprod\limits_{i=1}^n \sfX_i\to\sfF$ belonging to some prescribed function space $\sF:= \sF(\sfX,\sfF)$ satisfying $n$ functional equations of the form
\[
\psi (h_i (x)) = q_i (x) + s_i (x) \psi (x), \quad\text{on $\sfX$ and for $i\in \N_n$},
\]
where the functions $h_i$ partition $\sfX$ into disjoint subsets $\sfX_i = h_i(\sfX)$, $q_i\in \sF$, and the functions $s_i$ are chosen so that the product $s_i\cdot\psi\in\sF$. Note that, if such a global solution exists, then it is pieced together in a prescribed manner from copies of itself on the subsets $\sfX_i$. This latter property defines the self-referential nature of $\psi$ and expresses the fact that the graph of $\psi$ is in general a fractal set, i.e., an object of immense geometric complexity.

The fractal interpolation problem has been addressed in numerous publications and for different function spaces $\sF$ where the domains of $f\in \sF$ were one-dimensional, $n$-dimensional or infinite-dimensional. In a large number of these publications and for the fractal constructions upon which they are based, in particular in the one-dimensional setting, the partitions of the domain $\sfX$ are assumed to be induced by linear or affine functions. In this paper, we explicitly introduce nonlinear partition functions $h_i$ of $\sfX$ and show how nonlinear partitions induced by  such functions provide for more flexibility in the fractal interpolation problem.

This paper is organized as follows. In Section 2, iterated function systems are defined and related issues briefly discussed. The next section introduces the novel concept of fractal interpolation over nonlinear partitions and proves the existence of bounded and continuous solutions. In this context, perturbations of the nonlinear partition functions $h_i$ are also considered and the connection between the attractor of an iterated function system and the solution to the fractal interpolation problem is exhibited. Sections 4 and 5 deal with the existence of solutions of the fractal interpolation problem over nonlinear partitions in Bochner-Lebesgue and $\cC^\alpha$ function spaces.
\section{Iterated Functions Systems}
In the following, $\sfE:=(\sfE, \n{\cdot}_\sfE)$ and $\sfF:=(\sfF, \n{\cdot}_\sfF)$ always denote Banach spaces and the subscript on the norm is dropped if no confusion is to be expected.

For a map $f: \sfE \to \sfE$, we define the Lipschitz constant associated with $f$ by
\[
\Lip (f) = \sup_{x\in\sfE, x \neq 0} \frac{\n{f(x)}}{\n{x}}.
\]
A map $f$ is called \emph{Lipschitz} if $\Lip (f) < + \infty$ and a \emph{contraction} if $\Lip (f) < 1$.

\begin{definition}
Let $\sfE$ be a Banach space and $\cF$ a finite set of functions $\sfE\to\sfE$. Then, the pair $(\sfE,\cF)$ is called an iterated function system (IFS) on $\sfE$. In case all maps $f\in\cF$ are contractions then $(\sfE,\cF)$ is called a \emph{contractive} IFS.
\end{definition}

For a more general definition of IFS and related topics, we refer the interested reader to \cite{BWL}.

\begin{remark}
Throughout this paper, we deal with contractive IFSs and therefore we drop the adjective ``contractive."
\end{remark}

With the finite set of contractions $\cF$ on $\sfE$, one associates a set-valued operator, again denoted by $\cF$, acting on the hyperspace ${\sH}(\sfE)$ of nonempty compact subsets of $\sfE$:
\[
\cF (E) := \bigcup_{f\in \cF} f (E),\qquad E\in \sH(\sfE).
\]
The hyperspace ${\sH}(\sfE)$ endowed with the Hausdorff-Pompeiu metric $d_{\sH}$ defined by
\[
d_{\sH} (S_1, S_2) := \max\{d(S_1, S_2), d(S_2, S_1)\},
\]
where $d(S_1,S_2) := \sup\limits_{x\in S_1} d(x, S_2) := \sup\limits_{x\in S_1}\inf\limits_{y\in S_2} \n{x-y}_\sfE$, becomes a metric space.

It is a known fact that the completeness of $\sfE$ implies the completeness of $({\sH}(\sfE), d_{\sH})$ as a metric space. Moreover, the set-valued operator $\cF$ is contractive on the complete metric space $({\sH}(\sfE), d_{\sH})$ with Lipschitz constant $\Lip\cF = \max\{\Lip (f) : f\in \cF\} < 1$ if all $f\in \cF$ are contractions. 

In this case and by the Banach Fixed Point Theorem, $\cF$ has a unique fixed point in $\sH(\sfE)$. This fixed point is called the \emph{attractor} of or the \emph{fractal (set)} generated by the IFS $(\sfE,\cF)$. The attractor or fractal $F$ satisfies the self-referential equation
\be\label{fixedpoint}
F = \cF (F) = \bigcup_{f\in\cF} f (F),
\ee
i.e., $F$ is made up of a finite number of images of itself. Eqn. \eqref{fixedpoint} reflects the fractal nature of $F$ showing that it is as an object of immense geometric complexity.

The proof of the Banach Fixed Point Theorem also shows that the fractal $F$ can be obtained iteratively via the following procedure: Choose an arbitrary $F_0\in {\sH}(\sfE)$ and set
\be\label{F}
F_n := \cF (F_{n-1}),\qquad n\in\N.
\ee
Then $F =\lim\limits_{n\to\infty} F_n$, where the limit is taken with respect to the Hausdorff-Pompeiu metric $d_{\sH}$.

For more details about IFSs and fractals and their properties, we refer the interested reader to the large literature on these topics and list only two references \cite{barnsley,massopust1} which are closely related to the present exhibition.

\section{Fractal Interpolation over Nonlinear Partitions}\label{sect3}

We briefly recall the rudimentaries of fractal interpolation and fractal functions. 
Let $\sfU$ and $\sfV$ be open subsets of the Banach spaces $\sfE$ and $\sfF$, respectively. A mapping $f:\sfU\to\sfV$ is called a \emph{$C^1$-diffeomorphism} if $f$ is a bijection, Fr\'echet differentiable on $\sfU$, and the inverse $f^{-1}$ is Fr\'echet differentiable on $\sfV$. In an analogous fashion, higher order diffeomorphisms $\sfU\to\sfV$ are defined. The collection of all $C^\alpha$-diffeomorphisms from $\sfU\to\sfV$ with $\alpha\in \N_0\cup\{\infty\}$ is denoted by $\Diff^\alpha (\sfU,\sfV)$. In case, $\sfU := \sfE := \sfF$, we simply write $\Diff^\alpha (\sfE)$. 

If $\sfX\subset\sfE$ then $f:\sfX\to\sfF$ is called a $C^\alpha$-diffeomorphism on $\sfX$, in symbols $f\in \Diff^\alpha(\sfX,\sfF)$, if there exists an open $\sfU\subset\sfE$ with $\sfX\subset\sfU$ and a $C^\alpha$-diffeomorphism $g:\sfU\to\sfF$ such that $f = g\lvert_\sfX$. 

\subsection{Fractal Interpolation}
Let $\sfX$ be a nonempty bounded subset of a Banach space $\sfE$. Suppose we are given a finite family $h:= \{h_i\in \Diff^\alpha (\sfX)  : i = 1, \ldots, n\}$ of $C^\alpha$-diffeomorphisms generating a partition $\Pi(h)$ of $\sfX$ in the sense that
\be\label{c1}
\sfX = \coprod_{i=1}^n h_i(\sfX),
\ee
where $\coprod$ denotes the disjoint union of sets. We will also set $\sfX_i := h_i(\sfX)$.\\

Recall that a mapping $f:\sfE\to\sfF$ is called \emph{affine} if $f - f(0)$ is linear.

\begin{definition}
A partition $\Pi(h)$ of $\sfX$ is called \emph{nonlinear} if the maps generating $\Pi(h)$ are not affine.
\end{definition}

In the current context, we may express \eqref{c1} as follows: The IFS $(\sfX, h)$ has as its attractor $\sfX$ which consists of finitely many disjoint \emph{nonlinear} images of itself.

\begin{example}\label{ex2.1}
A fairly straight-forward example is given by choosing $\sfE := \R$, $\sfX := [0,1)$, and
\[
h_1 (x):= \tfrac16(2x + x^2)\quad\text{and}\quad h_2(x):= \tfrac12(1 + \sqrt{2}\sin\tfrac{\pi x}{4}).
\]
Here, $h_1, h_2\in\Diff^\infty [0,1)$. The nonlinear partition $\Pi (h)$ of $\sfX$ generated by these two maps is then $\{[0,\frac12) = h_1(\sfX), [\frac12, 1) = h_2(\sfX)\}$. 
\end{example}

One of the goals of fractal interpolation is the construction of a global function $\psi:\sfX = \coprod\limits_{i=1}^n \sfX_i\to\sfF$ belonging to some prescribed function space $\sF$ and satisfying $n$ functional equations of the form
\be\label{psieq}
\psi (h_i (x)) = q_i (x) + s_i (x) \psi (x), \quad\text{on $\sfX$ and for $i\in \N_n$},
\ee
where for each $i\in\N_n$, $q_i\in\sF$ and $s_i$ is chosen such that $s_i\cdot\psi\in\sF$. In other words, the global solution ist pieced together in a prescribed manner from copies of itself on the subsets $\sfX_i = h_i(\sfX)$. 

We refer to \eqref{psieq} as the \emph{fractal interpolation problem}. To solve this problem, the idea is to consider \eqref{psieq} as the fixed point equation for an associated affine operator acting on an appropriately defined or prescribed function space.
\subsection{Bounded fractal functions}
To this end, recall that a mapping $f:\sfE\to \sfF$ is called \emph{bounded} if there exists an $M> 0$ such  that $\n{f(x)} < M$ for all $x\in \sfE$. 

Let $\cB (\sfX,\sfF) := \{f:\sfX\to\sfF : \text{$f$ is bounded}\}$ denote the Banach space of bounded functions equipped with the supremums norm $\n{f} := \sup\limits_{x\in \sfX} \n{f(x)}_\sfF$. \\

Suppose that $q_i\in \cB (\sfX,\sfF)$ and $s_i\in \cB(\sfX) :=\cB (\sfX,\R)$, $i\in\N_n$. On the Banach space $\cB (\sfX,\sfF)$, we define an affine operator $T: \cB (\sfX,\sfF)\to \cB (\sfX,\sfF)$, called a Read-Bajractarevi\'c (RB) operator, by 
\be\label{eq3.17}
T f (x) := (q_i\circ h_i^{-1})(x) + (s_i\circ h_i^{-1})(x)\cdot (f\circ h_i^{-1})(x), 
\ee 
for $x\in \sfX_i$ and $i\in \N_n$, or, equivalently, by
\begin{align*}
T f (x) &= \sum_{i=1}^n (q_i\circ h_i^{-1})(x)\, \one_{\sfX_i}(x) + \sum_{i=1}^n (s_i\circ h_i^{-1})(x)\cdot (f\circ h_i^{-1})(x)\, \one_{\sfX_i}(x)\\
&= T(0) + \sum_{i=1}^n (s_i\circ h_i^{-1})(x)\cdot (f\circ h_i^{-1})(x)\, \one_{\sfX_i}(x),\quad x\in \sfX,
\end{align*}
where $\one_{\sfX_i}: \sfX\to \{0,1\}$ denotes the characteristic or indicator function of the set $\sfX_i$ and ``$\cdot$" pointwise multiplication.

The following result is well-known (see, for instance, \cite{B2,massopust1}) but for the sake of completeness we reproduce the proof. We also refer the interested reader to \cite{SB} where a similar set-up is considered.

\begin{theorem}\label{sol}
The system of functional equations \eqref{psieq} has a unique bounded solution $\psi: \sfX\to\sfF$ provided that
\begin{enumerate}
\item $\sfX = \coprod\limits_{i=1}^n h_i(\sfX)$,
\item $q_i\in \cB (\sfX,\sfF)$ and $s_i\in \cB (\sfX,\R)$, $i\in\N_n$, and
\item $s:= \max\limits_{i\in \N_n} \sup\limits_{x\in \sfX} |s_i(x)| < 1$.
\end{enumerate} 
\end{theorem}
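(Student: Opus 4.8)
The plan is to recognize the functional equations \eqref{psieq} as precisely the fixed point equation $T\psi = \psi$ for the RB operator $T$ introduced in \eqref{eq3.17}, and then to invoke the Banach Fixed Point Theorem on the complete metric space $\cB(\sfX,\sfF)$. Before that can be done, I would first check that $T$ is a well-defined self-map of $\cB(\sfX,\sfF)$. Well-definedness of the pointwise formula rests on hypothesis (1): since $\sfX = \coprod_{i=1}^n \sfX_i$ with $\sfX_i = h_i(\sfX)$, every $x\in\sfX$ lies in exactly one piece $\sfX_i$, so the branch-wise definition of $Tf(x)$ is unambiguous. For the target space, I would observe that each $q_i$ and $s_i$ is bounded by hypothesis (2), each $h_i$ is a bijection of $\sfX$ onto $\sfX_i$, and $f\in\cB(\sfX,\sfF)$; hence $q_i\circ h_i^{-1}$, $s_i\circ h_i^{-1}$ and $f\circ h_i^{-1}$ are all bounded on $\sfX_i$, the product $(s_i\circ h_i^{-1})\cdot(f\circ h_i^{-1})$ is bounded, and, there being only finitely many pieces, $Tf$ is bounded on all of $\sfX$. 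Thus $T(\cB(\sfX,\sfF))\subseteq\cB(\sfX,\sfF)$.

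The crux is the contraction estimate, which is where hypothesis (3) enters. For $f,g\in\cB(\sfX,\sfF)$ and $x\in\sfX_i$, the constant and $q_i$-terms cancel in the difference $Tf(x)-Tg(x)$, leaving
\[
Tf(x)-Tg(x) = (s_i\circ h_i^{-1})(x)\cdot\bigl((f-g)\circ h_i^{-1}\bigr)(x).
\]
Taking norms in $\sfF$ and using $\abs{(s_i\circ h_i^{-1})(x)}\le s$ together with $\n{((f-g)\circ h_i^{-1})(x)}_\sfF\le\n{f-g}$, I would obtain $\n{Tf(x)-Tg(x)}_\sfF\le s\,\n{f-g}$ on each $\sfX_i$. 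Since this bound is uniform over all branches $i\in\N_n$, passing to the supremum over $x\in\sfX$ yields $\n{Tf-Tg}\le s\,\n{f-g}$, and hypothesis (3) gives $s<1$, so $T$ is a contraction on $\cB(\sfX,\sfF)$.

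It then remains to apply the Banach Fixed Point Theorem: $\cB(\sfX,\sfF)$ is complete in the supremum norm, and $T$ is a contraction, so $T$ admits a unique fixed point $\psi\in\cB(\sfX,\sfF)$. Finally I would verify that $T\psi=\psi$ is equivalent to \eqref{psieq}: writing the fixed point identity for $x\in\sfX_i$ and substituting $x=h_i(y)$ with $y\in\sfX$ (legitimate because $h_i:\sfX\to\sfX_i$ is a bijection) turns $\psi(x)=(q_i\circ h_i^{-1})(x)+(s_i\circ h_i^{-1})(x)\cdot(\psi\circ h_i^{-1})(x)$ into $\psi(h_i(y))=q_i(y)+s_i(y)\psi(y)$, exactly the $i$th equation of \eqref{psieq}, and the steps reverse. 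I do not anticipate a genuine obstacle here, as the argument is a routine contraction-mapping proof; the only points requiring care are the well-definedness forced by the disjointness in (1) and the change of variables $x=h_i(y)$ establishing the equivalence, both of which are bookkeeping rather than substantive difficulties.
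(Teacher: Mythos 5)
Your proposal is correct and follows essentially the same route as the paper's proof: rewrite \eqref{psieq} as the fixed point equation for the RB operator \eqref{eq3.17}, check that $T$ maps $\cB(\sfX,\sfF)$ into itself, establish the contraction estimate $\n{Tf-Tg}\le s\,\n{f-g}$ via the bound on the $s_i$, and invoke the Banach Fixed Point Theorem. Your additional remarks on well-definedness from the disjointness in (1) and on the change of variables $x=h_i(y)$ merely make explicit what the paper treats implicitly.
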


\begin{proof}
First note that, since the mappings $h_i$ are injective, the right-hand side of \eqref{psieq} can be written as the right-hand side of \eqref{eq3.17}.

As the functions $h_i$, $q_i$, and $s_i$ are all assumed to be bounded, $T$ maps $\cB (\sfX,\sfF)$ into itself. For all $f,g\in \cB (\sfX,\sfF)$, we have that
\begin{align}
\sup_{x\in\sfX}\n{Tf(x) - Tg(x)}_\sfF &= \max_{i\in \N_n}\sup_{x\in\sfX_i} \n{(s_i\circ h_i^{-1})(x)\cdot (f-g)\circ h_i^{-1}(x)}_\sfF\nonumber\\
&= \max_{i\in \N_n}\sup_{\xi\in\sfX} \n{s_i(\xi)\cdot (f-g)(\xi)}_\sfF\label{3.4}\\
&\leq \max_{i\in \N_n}\sup\limits_{x\in \sfX} |s_i(x)| \sup_{x\in\sfX}\n{(f-g)(x)}_\sfF,\label{3.5}
\end{align}
from which it follows that
\[
\n{Tf-Tg} \leq s \n{f-g}.
\]
Hence, $T$ is contractive on the Banach space $\cB (\sfX,\sfF)$ and therefore, by the Banach Fixed Point Theorem, has a unique fixed point $\psi\in\cB (\sfX,\sfF)$. This fixed point solves the functional equations \eqref{psieq}.
\end{proof}
\begin{example}\label{ex3.2}
Consider the set-up in Example \ref{ex2.1} with $\sfF := \R$. Suppose $q_1 (x) := -1$, $q_2(x) := \floor{\sqrt{10 x}}$, $s_1 (x) := -\frac12\sin{x}$, and $s_2(x) := -\frac23\cos{x}$. Here, $\floor{\cdot} :\R\to\Z$ denotes the floor function.

Define an RB operator (to solve the fractal interpolation problem) by
\[
Tf (x) := \begin{cases}
(q_1\circ h_1^{-1}) (x)  -\tfrac12(\sin\circ h_1^{-1})(x)\cdot (f\circ h_1^{-1}) (x), & x\in [0,\tfrac13),\\
(q_2\circ h_2^{-1}) (x)  -\tfrac23(\cos\circ h_2^{-1})(x)\cdot (f\circ h_2^{-1}) (x), & x\in [\tfrac13, 1).
\end{cases}
\]
Then, $q_1, q_2,s_1,s_2\in \cB[0,1)$ and $s = \frac23 < 1$. The unique solution of the fractal interpolation problem, i.e., the fixed point $\psi$ of the above RB operator, is shown in Figure \ref{fig0}.
\begin{figure}[h!]
\begin{center}
\includegraphics[width= 6cm, height = 4cm]{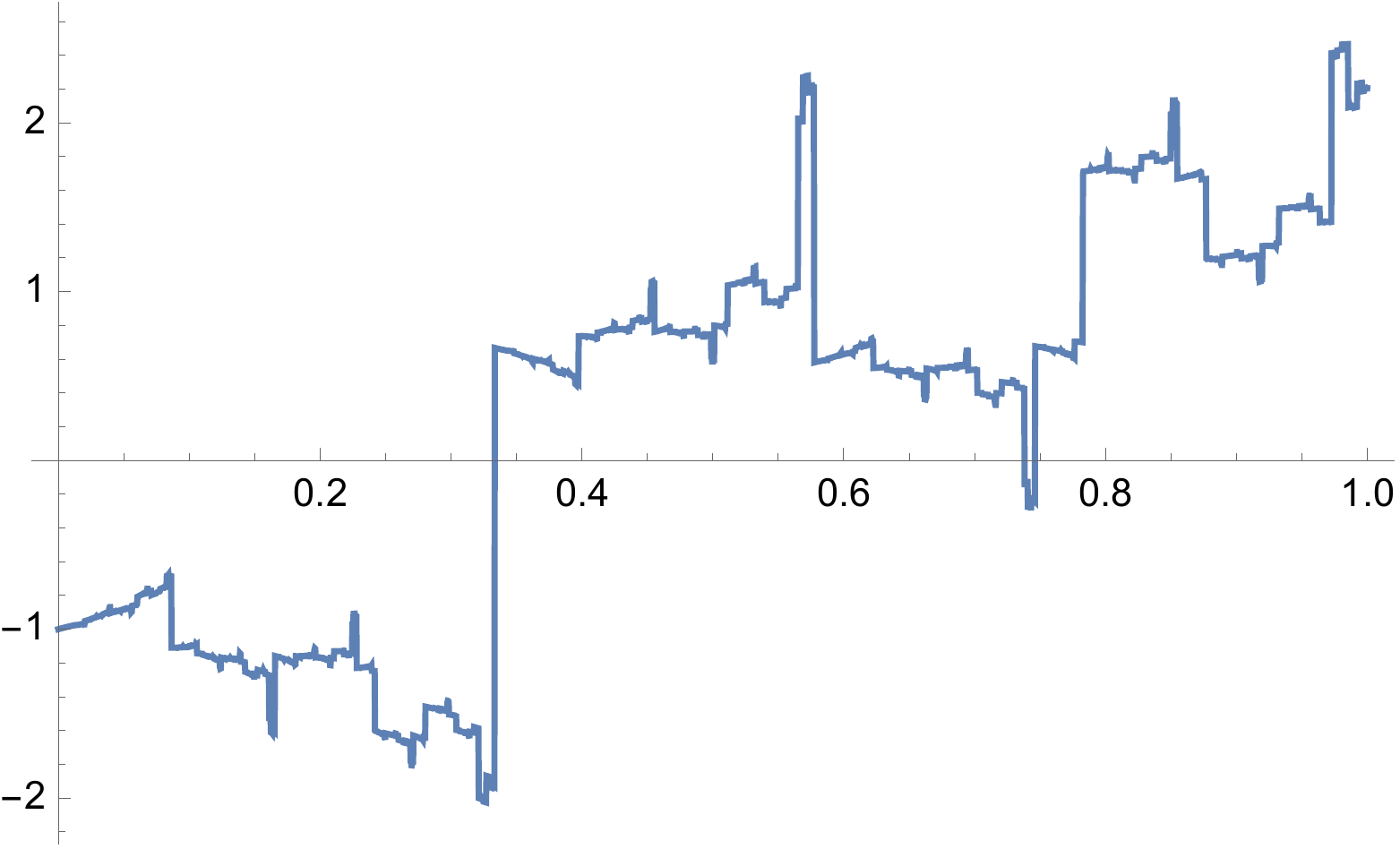}
\caption{The graph of a bounded fractal function $\psi$.}\label{fig0}
\end{center}
\end{figure}
\end{example}

At this point, several remarks are in order.
\begin{remarks}\label{rems3.1}\hfill
\begin{enumerate}
\item The fixed point $\psi\in\cB (\sfX,\sfF)$ of the RB operator $T$ is also called a \emph{bounded fractal function}. In this context, Eqn. \eqref{psieq} is also referred to as a \emph{self-referential equation} for $\psi$. 
\item The self-referential equation $T\psi = \psi$ expresses the fractal nature of the $\gr\psi$: It is made up of a finite number of copies of itself with each copy being supported on the partitioning sets $\sfX_i$. Hence, the terminology \emph{fractal function} for $\psi$.
\item The proof of Banach's Fixed Point Theorem also provides an algorithm for the construction of $\psi$: Choose \emph{any} function $\psi_0\in \cB (\sfX,\sfF)$ and iteratively define the following sequence of functions: 
$$
\psi_k := T \psi_{k-1}, \quad k\in \N. 
$$
Then, $\psi$ is given by $\psi = \lim\limits_{k\to\infty}\psi_k$ where the limit is taking with respect to the norm $\n{\cdot}$ on  $\cB (\sfX,\sfF)$.
\item The afore-mentioned algorithm for the construction of $\psi$ together with the proof of the Banach Fixed Point Theorem gives an error estimate as well, namely,
\[
\n{\psi - \psi_{k}} \leq \frac{s^k}{1-s} \n{\psi_1 - \psi_0}, \quad k\in \N.
\]
\item The fixed point $\psi$ depends on $n$, the partition $(\sfX_i : i\in\N_n)$, and the functions $s_i$ and $q_i$ with different choices yielding different fractal functions.
\item Emphasizing the dependence of $\psi$ on the functions $s_i$, the expression $s$-fractal function can be found in the literature. (See, for instance, \cite{N}.) In this context, one considers a fractal function as the image under an operator $\fF^s$ associating with a given (non-fractal) function $f$ its ``fractalization" $\psi_f = \fF^s f$ where $s = (s_1, \ldots, s_n)$.
\item Functional equations such as \eqref{psieq} exhibit connections to so-called \emph{fractels} \cite{bhm2,massopust1} and also to the approximation of rough functions \cite{BHVV}.
\item \eqref{psieq} is not the most general form of a functional equation that guarantees the existence of a fixed point $\psi$, that is, a fractal function. More generally, one can  consider mappings $v_i:\sfX\times\sfF\to\sfF$ which are uniformly contractive in the second variable, i.e., where there exists a $c\in [0,1)$ so that for all $y_1, y_2\in \sfF$
\be\label{scon}
\n{v_i (x, y)}_\sfF \leq c\, \n{y}_\sfF, \quad\forall x\in \sfX,\,\forall i \in\N_n.
\ee
Theorem \ref{sol} remains valid in this setting. We leave the easy to verify details to the interested reader. 
\end{enumerate}
\end{remarks}
\subsection{Connection between IFSs and the fractal interpolation problem}
Here, we exhibit the relation between the graph $G(\psi)$ of the fixed point $\psi$ of the operator $T$ and the attractor of the associated contractive IFS. We will do this for the more general setting mentioned above in Remarks \ref{rems3.1}(8).

For this purpose, consider the Banach space $\sfE\times\sfF$ with the product norm $\n{\cdot}_\sfE + \n{\cdot}_\sfF$. Let $\sfX\in\sH(\sfE)$ and let $\sfY\in\sH(\sfF)$ be such that the mappings $w_i:\sfX\times\sfY\to\sfX\times\sfF$ defined by
\be\label{wn}
w_i (x, y) := (h_i (x), v_i (x,y)), \quad i\in\N_n,
\ee
map into $\sfX\times\sfY$. (Cf. \cite[Proposition 27]{massopust1}.) 

Note that here the $h_i$ are defined on a compact subsets of $\sfE$ and in general $h_i(\sfX)\cap h_j(\sfX)\neq\emptyset$ for $i,j\in \N_n$ with $i\neq j$. However, we insist that 
$\sfX$ is \emph{non-overlapping} with respect to the IFS $(\sfX, h)$ in the sense of \cite[Definition 2.4]{bbhv} and that at the contact points (see Remark \ref{rem3.1}) the compatibility conditions \eqref{joinupc} hold. This will guarantee the existence of a solution $\psi$ \cite{SB}.

Assume that the mappings $v_i$ in addition to being uniformly contractive in the second variable are also uniformly Lipschitz continuous in the first variable, i.e., that there exists a constant $\lambda > 0$ so that for all $y\in\sfY$,
\[
\n{v_i(x_1, y) - v_i(x_2, y)}_\sfF \leq \lambda\, \n{x_1 - x_2}_\sfE, \quad\forall x_1, x_2\in \sfX,\quad\forall i \in\N_n.
\]
Let $\Lip (h) := \max\{\Lip{h_i} : i\in\N_n\}$ and let $\vartheta := \frac{1-\Lip(h)}{2\lambda}$. Then the mapping $\n{\cdot}_\vartheta : \sfE\times\sfF\to \R$ given by
\[
\n{\cdot}_\vartheta := \n{\cdot}_\sfE + \vartheta\,\n{\cdot}_\sfF
\]
is a norm on $\sfE\times\sfF$ compatible with the product topology on $\sfE\times \sfF$.

The next theorem is a special case of a result presented in \cite{bhm}.

\begin{theorem}\label{thm3.1}
Let $\cW := \{w_1, \ldots, w_n\}$. Then, $\cF := (\sfX\times\sfY, \cW)$ is a contractive IFS with respect to the norm $\n{\cdot}_\vartheta$ and the graph $G(\psi)$ of the solution to the fractal interpolation problem
\be\label{eq3.8}
\psi\circ h_i = v_i (x, \psi),\;\text{on $\sfX$ and $i\in\N_n$},
\ee
%
is the unique attractor of the IFS $\cF$. 

Furthermore, if $T$ is the RB operator 
\be\label{eq3.9a}
T: \cB(\sfX,\sfY)\to\cB(\sfX,\sfY), \quad f\mapsto v_i (h_i^{-1}, f(h_i^{-1})),\;\;\text{on $\sfX_i$},\; i\in\N_n,
\ee
associated with the fractal interpolation problem \eqref{eq3.8} then
\be\label{GW}
G(T f) = \cF (G(f)),\quad\forall\,f\in \cB(\sfX,\sfY),
\ee
where $\cF$ denotes the set-valued operator \eqref{fixedpoint}.
\end{theorem}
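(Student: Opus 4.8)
The plan is to reduce both assertions to the single intertwining identity \eqref{GW}, which I would prove first, together with an adapted-norm contractivity estimate for the maps $w_i$.

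To see that $\cF$ is a contractive IFS, fix $i\in\N_n$ and two points $(x_1,y_1),(x_2,y_2)\in\sfX\times\sfY$, and estimate the two components of $w_i$ separately in the norm $\n{\cdot}_\vartheta = \n{\cdot}_\sfE + \vartheta\,\n{\cdot}_\sfF$. The base component obeys $\n{h_i(x_1)-h_i(x_2)}_\sfE \le \Lip(h)\,\n{x_1-x_2}_\sfE$. For the fibre component, inserting the intermediate value $v_i(x_2,y_1)$ and combining the uniform Lipschitz continuity in the first variable with the uniform contractivity of $v_i$ in the second variable (with constant $c<1$) \eqref{scon} gives
\[
\n{v_i(x_1,y_1)-v_i(x_2,y_2)}_\sfF \le \lambda\,\n{x_1-x_2}_\sfE + c\,\n{y_1-y_2}_\sfF.
\]
Adding $\vartheta$ times this to the base estimate and substituting $\vartheta=\frac{1-\Lip(h)}{2\lambda}$ collapses the coefficient of $\n{x_1-x_2}_\sfE$ to $\frac{1+\Lip(h)}{2}$, while the coefficient of $\vartheta\,\n{y_1-y_2}_\sfF$ is exactly $c$. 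Hence each $w_i$ is a $\n{\cdot}_\vartheta$-contraction with factor $\kappa:=\max\{\tfrac{1+\Lip(h)}{2},c\}<1$, using $\Lip(h)<1$ and $c<1$; as this bound is uniform in $i$, the induced set-valued operator $\cF$ is a contraction on $(\sH(\sfX\times\sfY),d_\sH)$, and the Banach Fixed Point Theorem yields a unique attractor $A$.

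Next I would establish \eqref{GW}. Writing $G(Tf)=\bigcup_{i=1}^n\{(x,Tf(x)):x\in\sfX_i\}$ and inserting the definition \eqref{eq3.9a} of $T$ on the piece $\sfX_i$, I would perform the change of variables $x=h_i(\xi)$ with $\xi\in\sfX$. Since $h_i$ maps $\sfX$ bijectively onto $\sfX_i$, the $i$-th piece becomes $\{(h_i(\xi),v_i(\xi,f(\xi))):\xi\in\sfX\}=w_i(G(f))$, so that $G(Tf)=\bigcup_{i=1}^n w_i(G(f))=\cF(G(f))$; the disjointness in $\sfX=\coprod_i\sfX_i$ ensures the pieces reassemble into the full graph without double counting. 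Applying this to the fixed point $\psi=T\psi$ of the RB operator then gives $\cF(G(\psi))=G(T\psi)=G(\psi)$, so $G(\psi)$ is $\cF$-invariant and, by uniqueness of the attractor, $G(\psi)=A$, which settles the attractor claim of the first part.

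I expect the main obstacle to be the contractivity step: the Lipschitz-in-base constant $\lambda$ couples the fibre direction to the base direction, so the plain product norm need not contract, and it is precisely the weighted norm $\n{\cdot}_\vartheta$ with the calibrated weight $\vartheta=\frac{1-\Lip(h)}{2\lambda}$ that forces both resulting coefficients below $1$ simultaneously. A secondary technical point is ensuring that $G(\psi)$ is a legitimate element of $\sH(\sfX\times\sfY)$, i.e.\ nonempty and compact; here the hypotheses that $\sfX\times\sfY$ is compact, that each $w_i$ maps into $\sfX\times\sfY$, and the non-overlapping and compatibility conditions imposed before the theorem are what secure that $G(\psi)$ (or, if $\psi$ is only bounded, its closure) is the compact set fixed by $\cF$.
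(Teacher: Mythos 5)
Your argument is correct, but be aware that the paper itself contains no proof of this theorem: it is stated as ``a special case of a result presented in \cite{bhm},'' so there is no in-paper argument to compare against. What you have written is precisely the standard route taken in that reference and in \cite[Proposition 27]{massopust1}: the adapted-norm estimate, where the choice $\vartheta = \frac{1-\Lip(h)}{2\lambda}$ turns the coupled bound $\Lip(h)\n{x_1-x_2}_{\sfE} + \vartheta\bigl(\lambda\n{x_1-x_2}_{\sfE} + c\,\n{y_1-y_2}_{\sfF}\bigr)$ into a contraction with factor $\max\bigl\{\tfrac{1+\Lip(h)}{2},\,c\bigr\}<1$; then the change of variables $x=h_i(\xi)$ giving the intertwining \eqref{GW}; then uniqueness of the hyperspace fixed point applied to $\cF(G(\psi))=G(T\psi)=G(\psi)$. (You also read \eqref{scon} in its intended sense, $\n{v_i(x,y_1)-v_i(x,y_2)}_\sfF\le c\,\n{y_1-y_2}_\sfF$, which is what the argument requires; the displayed formula in the paper is garbled.) The one step that deserves to be made explicit is the point you flag at the end: to invoke uniqueness of the attractor in $\sH(\sfX\times\sfY)$ you must know that $G(\psi)$ is a nonempty \emph{compact} set, i.e.\ that the bounded fixed point $\psi$ is continuous. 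This is exactly what the paper's standing non-overlapping and compatibility hypotheses (via \cite{SB}) provide: they guarantee a continuous solution of \eqref{eq3.8}, which by uniqueness of the fixed point of $T$ in $\cB(\sfX,\sfY)$ must coincide with $\psi$, so $G(\psi)$ is compact and your conclusion $G(\psi)=A$ is legitimate. Your fallback of passing to the closure is the correct remedy if those hypotheses are dropped, but then the attractor is $\overline{G(\psi)}$ rather than $G(\psi)$ itself, which is weaker than the stated claim.
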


Equation \eqref{GW} can be visualized by the commutative diagram
\be\label{diagram}
\begin{CD}
\sfX\times \sfY @>\cF>> \sfX\times \sfY\\
@AAGA                  @AAGA\\
\cB(\sfX,\sfY) @>T>>  \cB(\sfX,\sfY)
\end{CD}
\ee
\nl
where $G$ is the mapping $\cB(\sfX,\sfF)\ni g\mapsto G(g) = \{(x, g(x)) : x\in \sfX\}\in \sfX\times \sfY$.

On the other hand, assume that $\cF = (\sfX\times\sfY, w_1, w_2, \ldots, w_n)$ is an IFS whose mappings $w_i$ are of the form \eqref{wn} where the functions $h_i$ are nonlinear partition functions $h_i:\sfX\to\sfX_i$ with non-overlapping attractor $\sfX$ and contact points \eqref{contact}. Further assume that the mappings $v_i$ are  uniformly Lipschitz continuous in the first variable and uniformly contractive in the second variable. 

Then we can associate with the IFS $\cF$ an RB operator $T$ of the form \eqref{eq3.9a} and thus a fractal interpolation problem \eqref{eq3.8} with appropriate compatibility conditions. The attractor $A$ of $\cF$ is then the graph $G(\psi)$ of the solution $\psi$ of \eqref{eq3.8}, respectively, the fixed point of $T$. 
%
\subsection{The case where $\sfF$ is a Banach algebra}
In case that $\sfF$ is in addition a Banach algebra $(\sfF, +, \bullet)$, i.e., when $(\sfF, +)$ is a vector space with a norm $\n{\cdot}$ and a product $\bullet:\sfF\times\sfF$ satisfying
\begin{enumerate}
\item $(\sfF, +, \n{\cdot})$ is a Banach space;
\item $(\sfF, +, \bullet)$ is an associate $\R$-algebra;
\item $\forall\,f_1,f_2\in \sfF: \n{f_1\bullet f_2} \leq \n{f_1}\n{f_2}$,
\end{enumerate}
one obtains a result similar to Theorem \ref{sol}. For the system of functional equations,
\be\label{BA}
\psi (h_i (x)) = q_i (x) + s_i (x)\bullet \psi (x), \quad\text{on $\sfX$ and for $i\in \N_n$},
\ee
define an RB operator on $\cB(\sfX,\sfF)$ by
\be
T f (x) := (q_i\circ h_i^{-1})(x) + (s_i\circ h_i^{-1})(x)\bullet (f\circ h_i^{-1})(x), 
\ee 
for $x\in \sfX_i$ and $i\in \N_n$, where now the functions $q_i, s_i:\sfX\to\sfF$.

In the proof of contractivity, in particular Eqns. \eqref{3.4} and \eqref{3.5}, these estimates now yield
\[
\sup_{x\in\sfX}\n{Tf(x) - Tg(x)}_\sfF \leq \max_{i\in \N_n}\sup\limits_{x\in \sfX} \n{s_i(x)}_\sfF \sup_{x\in\sfX}\n{(f-g)(x)}_\sfF 
\]
Hence, if 
\be\label{eq3.9}
\max_{i\in \N_n}\sup\limits_{x\in \sfX} \n{s_i(x)}_\sfF < 1, 
\ee
then the interpolation problem \eqref{BA} has a unique bounded solution. 
\begin{example}
Consider $\sfF := \sfB(\sfX)$, the Banach algebra of bounded linear operators on $\sfX$ where $\bullet$ is the composition of these operators. Thus, if \eqref{eq3.9} is satisfied then the fractal interpolation problem has a unique $\sfB(\sfX)$-valued solution $\psi:\sfX\to \sfB(\sfX)$:
\[
\psi(x) = q\circ h_i^{-1}(x) + (s_i\circ h_i^{-1})(\psi\circ h_i^{-1})(x),\quad\text{$x\in\sfX_i$ and for $i\in \N_n$},
\]
where we wrote the composition of the operators $s_i\circ h_i^{-1}$ and $\psi\circ h_i^{-1}$ as a juxtaposition.
\end{example}
\subsection{Continuous fractal functions}
Here, we consider continuous fixed points of an RB operator of the form \eqref{eq3.17} in the case of a nonlinear partition of $\sfX$. 

\begin{theorem}\label{th3.3}
The fractal interpolation problem \eqref{psieq} has a unique continuous solution $\psi: \sfX\to\sfF$ provided that
\begin{enumerate}
\item $\sfX = \coprod\limits_{i=1}^n h_i(\sfX)$,
\item the functions $q_i: \sfX\to\sfF$ and $s_i:\sfX\to\R$ are continuous,
\item $s:= \max\limits_{i\in \N_n} \sup\limits_{x\in \sfX} |s_i(x)| < 1$.
\item and for all $i,j\in \N_n$ and $x_1, x_2\in X$ the following join-up conditions are satisfied:
\begin{align}\label{joinupc}
\lim_{x\to x_1} f_j(x) &= f_i(x_2)\\ &\Longrightarrow\;\; \lim_{x\to x_1} q_j(x) + s_j(x) \psi(x) = q_i(x_2) + s_i(x_2) \psi(x_2).\nonumber
\end{align}
\end{enumerate} 
\end{theorem}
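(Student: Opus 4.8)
The plan is to repeat the proof of Theorem~\ref{sol} verbatim, but on the closed subspace $\mathcal{C}_b(\sfX,\sfF) := \{f \in \cB(\sfX,\sfF) : f \text{ continuous}\}$ of bounded continuous functions. Being a closed subspace of $\cB(\sfX,\sfF)$ in the supremum norm, $\mathcal{C}_b(\sfX,\sfF)$ is itself complete, and the contractivity estimate \eqref{3.4}--\eqref{3.5} used no property of $f,g$ beyond boundedness, so it carries over unchanged to yield $\n{Tf - Tg} \le s\,\n{f-g}$ on $\mathcal{C}_b(\sfX,\sfF)$ by hypotheses~(2) and~(3). Hence the whole theorem reduces to the single claim that $T$ maps continuous functions to continuous functions: once that is known, the Banach Fixed Point Theorem supplies a unique fixed point in $\mathcal{C}_b(\sfX,\sfF)$, which by uniqueness of the fixed point in the larger space $\cB(\sfX,\sfF)$ must be the solution $\psi$ of Theorem~\ref{sol}.

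To establish $T(\mathcal{C}_b(\sfX,\sfF)) \subseteq \mathcal{C}_b(\sfX,\sfF)$ I would split the analysis into the interiors of the pieces and their contact points. On $\Int\sfX_i$ the inverse $h_i^{-1}$ is continuous since $h_i \in \Diff^\alpha(\sfX)$, and $q_i,s_i,f$ are continuous, so the defining formula \eqref{eq3.17} realizes $Tf|_{\Int\sfX_i}$ as a sum and product of continuous maps and is therefore continuous. Continuity of $Tf$ can thus only break down at a contact point $z$, i.e.\ a point $z = h_i(x_2)$ of one piece that is simultaneously a limit of points $h_j(x)$ of a second piece as $x \to x_1$. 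Comparing the two expressions for $Tf$ coming from \eqref{eq3.17}, continuity of $Tf$ at such a $z$ is exactly the equality
\[
\lim_{x\to x_1}\bigl[q_j(x) + s_j(x)\,f(x)\bigr] = q_i(x_2) + s_i(x_2)\,f(x_2),
\]
which is the join-up condition \eqref{joinupc}.

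The main obstacle is that \eqref{joinupc} is imposed on the solution $\psi$, not on a general continuous $f$, and one cannot simply iterate: a jump of $Tf$ at a contact point is pulled back by the maps $h_i^{-1}$ and proliferates through $\sfX$ under further application of $T$, so continuity is not preserved for arbitrary $f$. The standard remedy, which I would adopt, is to single out the contact points together with the distinguished preimages $x_1,x_2 \in \sfX$ they determine---typically fixed points of the $h_i$---and to work in the closed affine subspace $\mathcal{C}^{*} \subset \mathcal{C}_b(\sfX,\sfF)$ of continuous functions whose values at these distinguished points are prescribed. These prescribed values are fixed consistently by evaluating \eqref{psieq} at the fixed points of the $h_i$ and imposing \eqref{joinupc}; the crucial observation is that on $\mathcal{C}^{*}$ both one-sided expressions for $Tf$ at a contact point depend on $f$ only through these prescribed values, so \eqref{joinupc} becomes a condition on fixed data and guarantees $T(\mathcal{C}^{*}) \subseteq \mathcal{C}^{*}$. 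The Banach Fixed Point Theorem on the complete space $\mathcal{C}^{*}$ then produces the continuous solution $\psi$. Checking that the distinguished-point values can be chosen consistently, and that with this choice \eqref{joinupc} holds for every $f \in \mathcal{C}^{*}$ rather than only for the fixed point, is the technical heart of the argument.
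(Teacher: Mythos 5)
Your proposal is correct and follows essentially the same route as the paper: the paper's own ``proof'' of Theorem~\ref{th3.3} is merely a pointer to \cite{SB} and \cite{massopust1}, and the RB-operator argument in the latter is exactly what you outline---restrict $T$ to a complete (affine) subspace of continuous functions whose values at the contact-point preimages are prescribed, so that the join-up conditions \eqref{joinupc} become conditions on fixed data, and then $T$-invariance plus the contractivity estimate of Theorem~\ref{sol} give the fixed point. Your diagnosis that continuity of $Tf$ can only fail at contact points, and that \eqref{joinupc} cannot be applied directly to arbitrary $f$ because it is phrased in terms of the solution $\psi$, is precisely the issue the cited references resolve in this way.
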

\begin{proof}
The proof follows well-established lines and ideas and, therefore, we refer the interested reader to \cite{SB} or \cite{massopust1}. In the former reference, the proof follows the functional equation setting and in the latter the RB operator setting.
\end{proof}

\begin{remark}\label{rem3.1}
Note that by insisting on the validity of the join-up conditions \eqref{joinupc}, we may drop the requirement that $\Pi(h)$ is a \emph{partition} and replace it by the requirement that at the \emph{contact points} 
\be\label{contact}
\{x_1\in \sfX : \exists x_2\in \sfX, \exists i_1, i_2\in\N_n, i_1\neq i_2, h_{i_1}(x_1) = h_{i_2}(x_2)\}
\ee
\eqref{joinupc} holds. See also \cite{SB}. 

In the following, we will tacitly make use of this observation and abuse notation by still writing $\sfX = \coprod\limits_{i=1}^n h_i(\sfX)$.
\end{remark}

An example of a continuous solution $\psi$ of the fractal interpolation problem is shown below. This example also demonstrates the affect of a nonlinear partition on the graph of a fractal function.

\begin{example}\label{ex3.4}
Consider the set up of Examples \ref{ex2.1}, let $\sfX := [0,1]$, and let $\sfF:= \R$. Note that the maps $h_1, h_2$ do not generate a partition of $[0,1]$ but at the contact point $x=\frac12$ they have the same value. (See Remark \ref{rem3.1}.) The inverse maps for $h_1$ and $h_2$ are given by
\[
h_1^{-1}: [0,\tfrac12]\to [0,1],\quad x\mapsto  -1 +\sqrt{6x+1}
\]
and
\[ 
h_2^{-1}: [\tfrac12,1]\to [0,1], \quad x\mapsto \tfrac4\pi \arcsin\tfrac{2x-1}{\sqrt{2}},
\]
respectively. Define an RB operator $T: \cC(\sfX,\R)\to \cC(\sfX,\R)$ by
\[
T f (x) := \begin{cases}
-1 +\sqrt{6x+1} + \tfrac12 f(-1 +\sqrt{6x+1}), & x\in [0,\tfrac12),\\
1 - \tfrac4\pi \arcsin\tfrac{2x-1}{\sqrt{2}} + \tfrac12 f(\tfrac4\pi \arcsin\tfrac{2x-1}{\sqrt{2}}), & x\in [\tfrac12, 1).
\end{cases}
\]
Here, $\cC(\sfX) := \cC(\sfX,\R)$ denotes the Banach space of continuous function $\sfX\to \R$ endowed with the supremum norm. Note that the compatibility condition \eqref{joinupc} at the contact point is satisfied.

In Figure \ref{fig1}, the graph of the fixed point $\psi$ and the Takagi function are shown. The latter uses an affine partition generated by $x\mapsto \frac12 x$ and $x\mapsto \frac12 (x+1)$ of $\sfX$.
\begin{figure}[h!]
\begin{center}
\includegraphics[width= 5cm, height = 3.5cm]{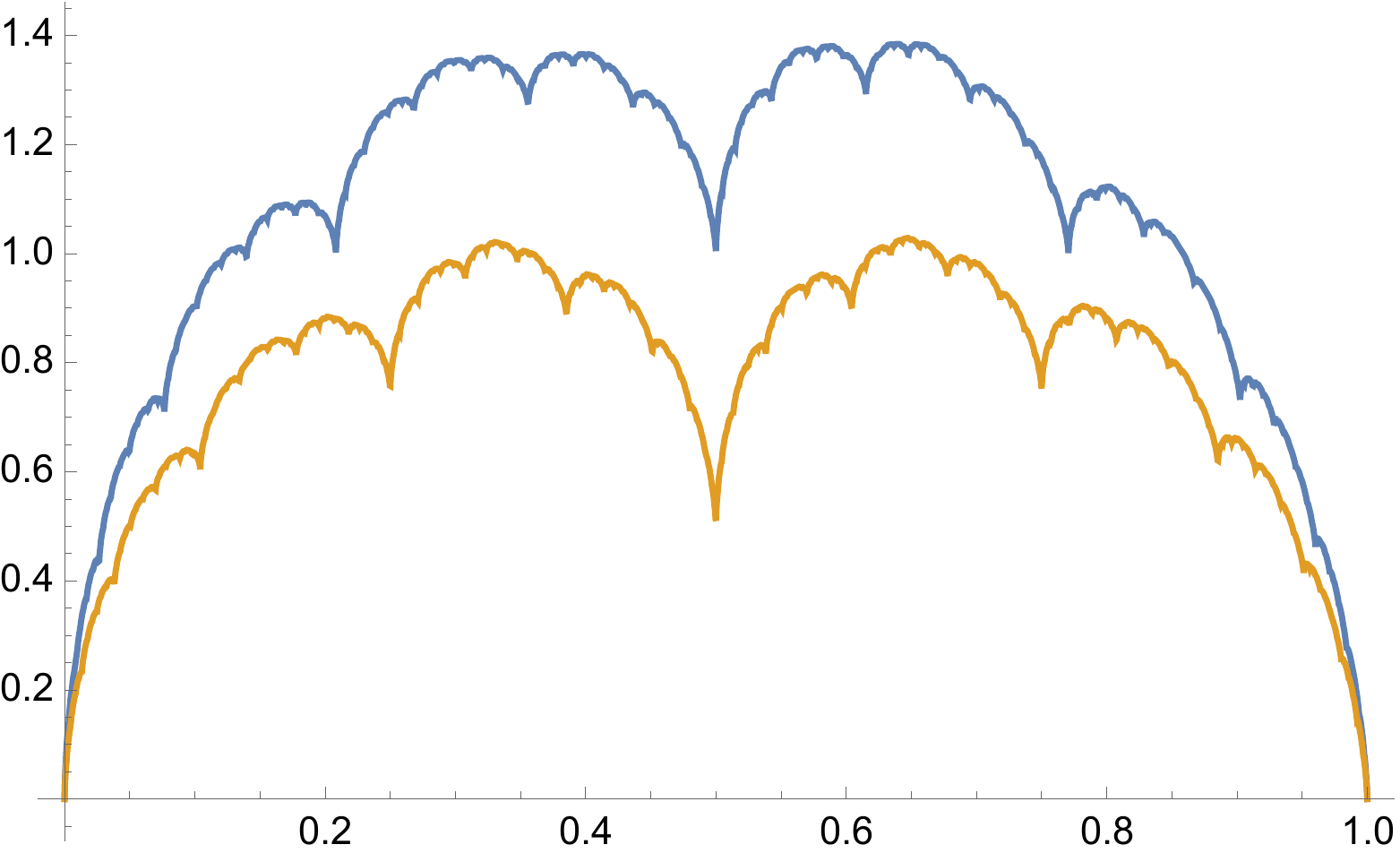}
\caption{The graph of $\psi$ (upper graph) and the graph of the Takagi function (lower graph).}\label{fig1}
\end{center}
\end{figure}
\end{example}
Now, consider nonlinear mappings $h_i(\cdot, \vep)$ depending on a parameter $\vep\in (0,1]$ in such a way that for $\vep = 0$, $h_i (\cdot, 0)$ is an affine mapping. The example below illustrates this approach.
\begin{example}\label{ex3.5}
Let again $\sfE := \R$, $\sfX := [0,1]$, and let $\vep\in [-\frac12, \frac12]\subset\R$ be fixed. Define nonlinear mappings 
\[
h_1(\cdot, \vep): [0,1]\to [0,\tfrac12],\quad x\mapsto (\tfrac12 - \vep) x + \vep x^2
\]
and
\[ 
h_2(\cdot, \vep): [0,1]\to [\tfrac12,1],\quad x\mapsto := \tfrac12 + (\tfrac12+\vep) x - \vep x^2.
\]
Clearly, $h_1(x,0) = \frac12 x$ and $h_2(x,0) = \frac12(1+x)$ are an affine partition of $\sfX$ with contact point $x=\frac12$ (see Example \ref{ex3.4}). The inverse functions $h_1 (\cdot, \vep)$ and $h_2 (\cdot, \vep)$ are given by
\[
h_1^{-1} (x, \vep) = \frac{-1+2\vep+\sqrt{1+4 (4 x-1) \vep +4 \vep ^2}}{4 \vep }
\]
and
\[
h_2^{-1} (x, \vep) = \frac{1+2\vep+\sqrt{1+ 4(3 - 4x)\vep +4 \vep ^2}}{4 \varepsilon },
\]
respectively. Note that $\lim\limits_{\vep\downarrow 0} h_i^{-1} (x,\vep)$ produces $h_i^{-1} (x)$ and that the inverses $h_i^{-1}$ only exist for $\vep\in [-\frac12, \frac12]$.

Define an RB operator $T (\vep): \cC(\sfX)\to \cC(\sfX)$ by
\[
T(\vep) f (x) := \begin{cases}
h_1^{-1}(x,\vep) + \tfrac12 f(h_1^{-1}(x,\vep)), & x\in [0,\tfrac12),\\
h_2^{-1}(x,\vep) + \tfrac12 f(h_2^{-1}(x,\vep)), & x\in [\tfrac12, 1).
\end{cases}
\]
The fixed point $\psi_\vep$ for various values of $\vep$ is depicted in Figure \ref{fig2}.
\begin{figure}[h!]
\begin{center}
\includegraphics[width= 6cm, height = 4cm]{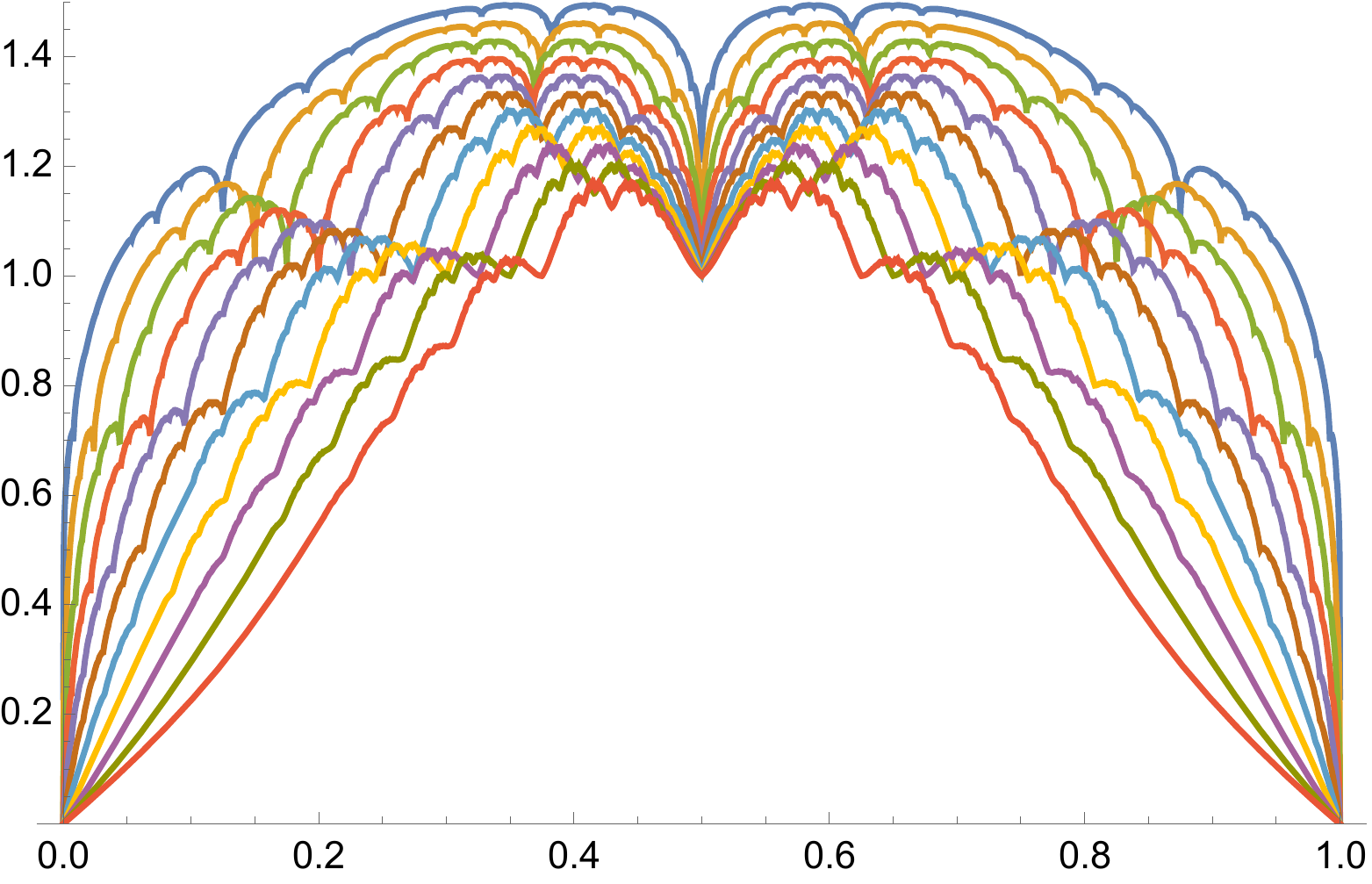}
\caption{The graphs of $\psi_\vep$ for $\vep = \frac{j}{10}$, $j \in \{-5, \ldots, 5\}$ (top to bottom).}\label{fig2}
\end{center}
\end{figure}
\end{example}
\subsection{Some perturbation theory}
For the purposes of this subsection, we require the following result. A similar result can also be found in reference \cite{barnsley}.
\begin{proposition}\label{prop3.1}
Let $(\sfE, \n{\cdot})$ be a Banach space and let $(\sfP, \n{\cdot}_\sfP)$ be a normed space. Suppose that $T(p): \sfE\to\sfE$ is a contraction for each $p\in\sfP$ with Lipschitz constant $L_p$. Assume that there exists an $L\in[0,1)$ such that $L_p \leq L$, for all $p\in \sfP$. Further assume that there exists a $p_0\in \sfP$ such that $\lim\limits_{p\to p_0} T(p) (e) = T(p_0) (e)$, for all $e\in \sfE$. Then the fixed point equation $T(p) e = e$ possesses for each $p\in \sfP$ a unique solution $e_p\in \sfE$ and $\lim\limits_{p\to p_0} e_p = e_{p_0}$.
\end{proposition}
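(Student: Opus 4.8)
The plan is to treat the two assertions separately. The existence and uniqueness of a fixed point $e_p$ for each fixed $p\in\sfP$ is immediate from the Banach Fixed Point Theorem: for every $p$ the map $T(p)$ is a contraction on the complete space $\sfE$ with Lipschitz constant $L_p\leq L<1$, so it has a unique fixed point. Thus the only substantive work lies in establishing the continuous dependence $\lim\limits_{p\to p_0} e_p = e_{p_0}$.

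For the convergence I would aim for a quantitative bound on $\n{e_p - e_{p_0}}$ that involves the operators only through their action at the single point $e_{p_0}$, so that the pointwise convergence hypothesis suffices. Starting from the fixed point identities $e_p = T(p)(e_p)$ and $e_{p_0} = T(p_0)(e_{p_0})$, I would insert the intermediate term $T(p)(e_{p_0})$ and apply the triangle inequality:
\[
\n{e_p - e_{p_0}} \leq \n{T(p)(e_p) - T(p)(e_{p_0})} + \n{T(p)(e_{p_0}) - T(p_0)(e_{p_0})}.
\]
The first summand is controlled by contractivity, $\n{T(p)(e_p) - T(p)(e_{p_0})}\leq L_p\,\n{e_p - e_{p_0}}\leq L\,\n{e_p - e_{p_0}}$. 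Absorbing this term on the left and dividing by $1-L>0$ yields
\[
\n{e_p - e_{p_0}} \leq \frac{1}{1-L}\,\n{T(p)(e_{p_0}) - T(p_0)(e_{p_0})}.
\]
By the hypothesis $\lim\limits_{p\to p_0} T(p)(e) = T(p_0)(e)$ applied at $e=e_{p_0}$, the right-hand side tends to $0$ as $p\to p_0$, and the desired convergence follows.

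The point to be careful about—though it is not really an obstacle—is precisely that this estimate evaluates the operators $T(p)$ and $T(p_0)$ at the \emph{single} fixed element $e_{p_0}$. This is what makes mere pointwise (rather than uniform) convergence of $T(p)$ to $T(p_0)$ sufficient, and it is the reason for inserting $T(p)(e_{p_0})$ as the intermediate term rather than, say, $T(p_0)(e_p)$, which would instead require knowledge of $T(p_0)$ at the moving points $e_p$. The uniform bound $L$ on all the Lipschitz constants is what guarantees that the prefactor $\frac{1}{1-L}$ is finite and independent of $p$, so that it does not interfere with passing to the limit.
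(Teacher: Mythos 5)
Your proposal is correct and follows essentially the same argument as the paper: the paper likewise inserts the intermediate term $T(p)(e_{p_0})$, absorbs the contraction term $L\,\n{e_p - e_{p_0}}$ to the left, and concludes from the bound $\n{e_p - e_{p_0}} \leq \frac{1}{1-L}\n{T(p)(e_{p_0}) - T(p_0)(e_{p_0})}$ by letting $p\to p_0$. Your added remark about why pointwise convergence suffices is a nice clarification, but the underlying proof is the same.
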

\begin{proof}
The Banach Fixed Point Theorem guarantees a unique solution $e_p$ of the fixed point equation for each $p\in \sfP$. Furthermore,
\begin{align*}
\n{e_p - e_{p_0}} & = \n{T(p) e_p - T(p_0) e_{p_0}}\\
&\leq \n{T(p) e_p - T(p) e_{p_0}} + \n{T(p) e_{p_0} - T(p_0) e_{p_0}}\\
& \leq L \n{e_p - e_{p_0}} + \n{T(p) e_{p_0} - T(p_0)e_{p_0}},
\end{align*}
and, therefore, 
\[
\n{e_p - e_{p_0}} \leq \frac{1}{1-L}\, \n{T(p) e_{p_0} - T(p_0) e_{p_0}}.
\]
The statement now follows by taking the limit $p\to p_0$.
\end{proof}

Now, consider the case $(\sfE,\n{\cdot}_\sfE):= (\cC(\sfX), \n{\cdot})$, where $\n{\cdot}$ denotes the supremum norm, and $(\sfP, \n{\cdot}_\sfP) := (\R, \abs{\cdot})$. Assume that the functions $h_i : \sfX\times \R\to\sfX$, $(x,\vep)\mapsto h_i(x,\vep)$ and their inverses $h_i^{-1}$ depend continuously on the parameter $\vep\in I_1$, for some nonempty interval $I_1\subset\R$, and generate a partition of $\sfX$. (See also Remark \ref{rem3.1} in this context.)

Define an RB operator $T(\vep) : \cC(\sfX,\sfF)\to\cC(\sfX,\sfF)$ by
\be\label{Tvep}
T(\vep) f (x) := q_i (h_i^{-1}(x,\vep)) + s_i (h_i^{-1}(x,\vep))\cdot f(h_i^{-1}(x,\vep)), 
\ee
for $x\in \sfX_i$ and $i\in \N_n$, where the functions $q_i: \sfX\to\sfF$ and $s_i:\sfX\to\R$ are continuous.

Suppose there exists a nonempty interval $I_2\subset\R$ such that 
\[
s_\vep:= \max\limits_{i\in \N_n} \sup\limits_{x\in \sfX_i} |s_i(h_i^{-1}(x,\vep))| \leq s < 1, \quad\forall \vep\in I_2,
\]
for some $s\geq 0$. Then, if $I:= I_1\cap I_2\neq\emptyset$, $T(\vep)$ is a contraction on $\cC(\sfX,\sfF)$ for all $\vep\in I$. Proposition \ref{prop3.1} thus implies that the fixed point $\psi_\vep\in\cC(\sfX,\sfF)$ of $T(\vep)$ is continuous in $\vep$ for all $\vep\in I$. 

We summarize these findings in the next theorem.
\begin{theorem}
Suppose an RB operator $T(\vep)$ is defined as in $\eqref{Tvep}$. Suppose further that there exists a nonempty interval $I\subset\R$ such that 
\begin{enumerate}
\item[(i)] the functions $h_i : \sfX\times \R\to\sfX$, $(x,\vep)\mapsto h_i(x,\vep)$, and their inverses $h_i^{-1}$ depend continuously on $\vep\in I$ and 
\item[(ii)] $s_\vep:= \max\limits_{i\in \N_n} \sup\limits_{x\in \sfX_i} |s_i(h_i^{-1}(x,\vep))| \leq s < 1$, for all $\vep\in I$ and some $s\geq 0$. 
\end{enumerate}
Then the fixed point $\psi_\vep\in\cC(\sfX,\sfF)$ of $T(\vep)$, respectively, the solution of the underlying fractal interpolation problem depending on the parameter $\vep$,
\[
\psi_\vep =  q_i (h_i^{-1}(x,\vep)) + s_i (h_i^{-1}(x,\vep))\cdot \psi_\vep(h_i^{-1}(x,\vep)),\quad x\in \sfX_i\;i\in \N_n,
\]
is continuous in $\vep$ for all $\vep\in I$. 
\end{theorem}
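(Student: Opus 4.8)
The plan is to apply Proposition~\ref{prop3.1} with the Banach space $\sfE := (\cC(\sfX,\sfF), \n{\cdot})$, the normed space $\sfP := (I, \abs{\cdot})$, and the parameterized family $\{T(\vep) : \vep \in I\}$ of RB operators. Fix an arbitrary base point $\vep_0 \in I$; since $\vep_0$ is arbitrary, continuity of $\vep \mapsto \psi_\vep$ at every point of $I$ will follow. The three hypotheses of Proposition~\ref{prop3.1} to be verified are that each $T(\vep)$ is a contraction on $\cC(\sfX,\sfF)$, that the associated Lipschitz constants admit a common bound $L < 1$, and that $\lim_{\vep\to\vep_0} T(\vep) f = T(\vep_0) f$ for every fixed $f \in \cC(\sfX,\sfF)$.

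First I would dispatch the two contraction hypotheses together. That each $T(\vep)$ maps $\cC(\sfX,\sfF)$ into itself and possesses a unique continuous fixed point is precisely the content of Theorem~\ref{th3.3}, invoking the tacit join-up conditions at the contact points. The contraction estimate is then identical to the computation in the proof of Theorem~\ref{sol}: splitting over the pieces $\sfX_i$ and substituting $\xi = h_i^{-1}(x,\vep)$ yields, for $f,g \in \cC(\sfX,\sfF)$,
\[
\n{T(\vep)f - T(\vep)g} \leq \max_{i\in\N_n}\,\sup_{x\in\sfX_i} \abs{s_i(h_i^{-1}(x,\vep))}\,\n{f-g} = s_\vep\,\n{f-g} \leq s\,\n{f-g}.
\]
Thus $L_\vep = s_\vep \leq s =: L < 1$ by hypothesis (ii), which supplies the uniform bound required by Proposition~\ref{prop3.1}.

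The crux is the third hypothesis, the pointwise-in-$f$ continuity at $\vep_0$. Fixing $f \in \cC(\sfX,\sfF)$ and writing $\xi_\vep := h_i^{-1}(x,\vep)$, the difference on each piece $\sfX_i$ reads
\[
T(\vep)f(x) - T(\vep_0)f(x) = \big[q_i(\xi_\vep) - q_i(\xi_{\vep_0})\big] + \big[s_i(\xi_\vep)f(\xi_\vep) - s_i(\xi_{\vep_0})f(\xi_{\vep_0})\big].
\]
I would bound each bracket in the supremum norm. The main obstacle is upgrading the convergence $\xi_\vep \to \xi_{\vep_0}$ supplied by hypothesis (i) from pointwise (in $x$) to uniform (in $x$); this is exactly where the compactness of the domain $\sfX$ enters. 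Reading (i) as continuity of $\vep \mapsto h_i^{-1}(\cdot,\vep)$ into $\cC(\sfX,\sfE)$, equivalently $\sup_{x}\n{\xi_\vep - \xi_{\vep_0}}_\sfE \to 0$ as $\vep \to \vep_0$, and using that $q_i$, $s_i$, and $f$ are uniformly continuous and bounded on the compact set $\sfX$, each bracket tends to $0$ uniformly in $x$. For the second bracket one inserts the intermediate term $s_i(\xi_{\vep_0})f(\xi_\vep)$ and applies the triangle inequality, so that the $\vep$-dependence of $s_i\circ h_i^{-1}$ and of $f\circ h_i^{-1}$ is treated separately, each time using the boundedness of the remaining factor.

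With all three hypotheses in hand, Proposition~\ref{prop3.1} produces for every $\vep \in I$ a unique fixed point $\psi_\vep \in \cC(\sfX,\sfF)$ together with $\lim_{\vep\to\vep_0}\psi_\vep = \psi_{\vep_0}$. Since $\vep_0 \in I$ was arbitrary, the map $\vep \mapsto \psi_\vep$ is continuous on all of $I$, which is the assertion.
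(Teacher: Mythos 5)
Your proposal is correct and follows the paper's own route: the paper proves this theorem exactly by applying Proposition \ref{prop3.1} to the family $T(\vep)$ acting on $(\cC(\sfX,\sfF),\n{\cdot})$, with hypothesis (ii) supplying the uniform contraction bound $s<1$ and hypothesis (i) supplying the convergence $T(\vep)f\to T(\vep_0)f$. The only difference is one of detail, not of method: the paper leaves the verification of that limit hypothesis implicit, whereas you spell it out via uniform continuity of $q_i$, $s_i$, $f$ on the compact domain together with the insertion of the intermediate term $s_i(\xi_{\vep_0})f(\xi_\vep)$ — a worthwhile elaboration of the same argument.
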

We remark, that in case $s_i \in\R$, for all $i\in\N_n$, and $\max\limits_{i\in\N_n} \abs{s_i} =: s < 1$, the solution of the fractal interpolation problem, respectively, the fixed point $\psi_\vep$ is continuous in $\vep$, for all $\vep\in I_1$. This situation is illustrated in Example \ref{ex3.5} where $I_1 = [-\frac12,\frac12]$.
\section{Bochner-Lebesgue Spaces}
To this end, assume again that $\sfX$ is a nonempty bounded subset of a Banach space $\sfE$ and that $\X :=(\sfX,\sA, \mu)$ is a complete $\sigma$-finite measure space with $\sigma$-algebra $\sA$ and scalar-valued measure $\mu$. Recall that for $1\leq p\leq \infty$, the Bochner-Lebesgue spaces $L^p (\X,\sfF)$ consist of (equivalence classes of) Bochner measurable functions $f:\sfX\to \sfF$ such that
\[
\|f\|_{L^p(\X,\sfF)} := \left(\int_{\sfX} \|f(x)\|_\sfF^p \,d\mu(x)\right)^{1/p} < \infty, \quad 1 \leq p < \infty,
\]
and
\[
\|f\|_{L^\infty(\X,\sfF)} := \esssup_{x\in \sfX} \|f(x)\|_\sfF < \infty, \quad p = \infty.
\]
For $0 < p <1$, the spaces $L^p(\X,\sfF)$ are defined as above but instead of a norm, a metric is used to obtain completeness. More precisely, for $p\in (0,1)$, define $d_p : L^p(\X, \sfF)\times L^p(\X,\sfF)\to \R$ by
\[
d_p (f,g) := \|f - g\|_{\sfF}^p.
\]
Note that the inequality $(a+b)^p \leq a^p + b^p$ holds for all $a,b\geq 0$. Then, $(L^p(\X,\sfF), d_p)$ becomes an $F$-space, i.e., a topological vector space whose topology is induced by a complete translation-invariant metric \cite{R}.  For more details about these spaces and related topics mentioned above, we refer the reader to, for instance, \cite{A,HNVW,rudin}.\\

%
In order to solve the fractal interpolation problem \eqref{psieq} for Bochner- Lebesgues spaces, conditions on the functions $q_i$ and $s_i$ need to be found such that the solution $\psi$ is an element of $L^p(\X, \sfF)$, for $0 < p \leq \infty$. 

To this end, we assume that $\sfE := \R^m$ in which case we take for $\mu$ Lebesgue measure $\lambda$. Note that in order for $\psi$ to be in $L^p(\X, \sfF)$, the RB operator $T$ must map $L^p(\X, \sfF)$ into itself. Therefore, the functions $q_i$ and $s_i$ must also be in $L^p(\X, \sfF)$. Moreover, $s_i$ needs to be in $L^\infty(\X, \sfF)$ for the pointwise product $s_i\cdot f$ to be in $L^p(\X, \sfF)$. Thus, as $\sfX$ is bounded it has finite measure and therefore $s_i\in L^\infty(\X, \R)$ implies that $s_i\in L^p(\X, \R)$ for all $1\leq p \leq \infty$.

Now, it remains to be established under what conditions the RB operator $T$ is contractive on $L^p(\X, \sfF)$. For this purpose, let $f,g\in L^p(\X, \sfF)$. Then, with $\sfX_i:= h_i(\sfX)$ and for $1\leq p \leq\infty$,
\begin{align*}
\n{Tf - Tg}_{L^p(\X,\sfF}^p &= \int_\sfX \n{Tf (x) - Tg(x)}_\sfF^p d\lambda(x)\\
&= \int_\sfX \n{\sum_{i=1}^n s_i(h_i^{-1}(x))\cdot (f-g)(h_i^{-1}(x)) \one_{\sfX_i}(x)}_\sfF^p d\lambda(x)\\
&\leq \sum_{i=1}^n \int_{\sfX_i} \abs{s_i(h_i^{-1}(x))}^p \n{(f-g)(h_i^{-1}(x))}_\sfF^p d\lambda(x)\\
&= \sum_{i=1}^n \int_{\sfX} \abs{\det{(D h_i)(x)}} \abs{s_i(x)}^p \n{(f-g)((x)}^p_\sfF d\lambda(x),
\end{align*}
where the transformation theorem for Lebesgue measure was used and $D$ denotes the derivative in $\R^m$.

Let 
\be
\gamma := \sum_{i=1}^n\, \sup_{x\in \sfX}\left\{
\abs{\det{(D h_i)(x)}} \abs{s_i(x)}^p
\right\}.
\ee
If $\gamma < 1$, then 
\[
\n{Tf - Tg}_p \leq \gamma \n{f-g}_p
\]
and the operator $T$ is contractive on $L^p (\X, \sfF)$.

Hence, we arrived at the following result
\begin{theorem}\label{th3.2}
The system of functional equations, respectively, the fractal interpolation problem,
\[
\psi (h_i (x)) = q_i (x) + s_i (x) \psi (x), \quad\text{on $\sfX\subset\R^m$ and for $i\in \N_n$},
\]
has a unique solution $\psi\in L^p(\X, \sfF)$, $0\leq p \leq \infty$, respectively, the RB operator
\[
T f (x) = (q_i\circ h_i^{-1})(x) + (s_i\circ h_i^{-1})(x)\cdot (f\circ h_i^{-1})(x),\quad{x\in\sfX_i},\;i\in \N_n,
\]
a unique fixed point $\psi\in L^p(\X, \sfF)$ provided that
\begin{enumerate}
\item $q_i\in L^p(\X, \sfF)$, $s_i\in L^\infty (\X, \sfF)$ and 
\item 
\[
\gamma = \begin{cases}
\displaystyle{\sum\limits_{i=1}^n}\, \sup\limits_{x\in \sfX}\left\{\abs{\det{(D h_i)(x)}} \abs{s_i(x)}^p\right\} , & 0 < p < \infty;\\
\max\limits_{i\in \N_n}\sup\limits_{x\in \sfX}\left\{\abs{s_i(x)}^p\right\}, & p = \infty;
\end{cases}\quad < 1.
\]
\end{enumerate}
\end{theorem}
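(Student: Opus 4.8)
The plan is to realize the solution $\psi$ as the unique fixed point of the RB operator $T$ and invoke the Banach Fixed Point Theorem when $1\le p\le\infty$, respectively the contraction principle on a complete metric space when $0<p<1$, where $(L^p(\X,\sfF),d_p)$ is only an $F$-space. Hence the two things to establish are that $T$ maps $L^p(\X,\sfF)$ into itself and that $T$ is a strict contraction; as in the proof of Theorem \ref{sol}, the injectivity of the $h_i$ guarantees that the self-referential equations \eqref{psieq} are equivalent to $T\psi=\psi$, so a fixed point is exactly a solution.

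First I would verify well-definedness. Writing $T0=\sum_{i=1}^n (q_i\circ h_i^{-1})\,\one_{\sfX_i}$, the change of variables $x=h_i(\xi)$ gives $\int_{\sfX_i}\n{q_i\circ h_i^{-1}}_\sfF^p\,d\lambda=\int_\sfX \abs{\det (Dh_i)(\xi)}\,\n{q_i(\xi)}_\sfF^p\,d\lambda(\xi)$, which is finite since $q_i\in L^p(\X,\sfF)$ and $\abs{\det Dh_i}$ is bounded on the bounded set $\sfX$ (as $h_i\in\Diff^\alpha$ with $\alpha\ge1$). Together with $s_i\in L^\infty$, which controls the product term $s_i\cdot f$, and the contraction estimate below applied to $f$ and $0$, this yields $Tf\in L^p(\X,\sfF)$ for every $f\in L^p(\X,\sfF)$.

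The core of the argument is the contraction estimate, which is precisely the computation carried out just before the statement. For $0<p<\infty$ the $q_i$-terms cancel in $Tf-Tg$, and since the sets $\sfX_i$ are pairwise disjoint, at each point at most one summand survives, so
\[
\int_\sfX \n{Tf-Tg}_\sfF^p\,d\lambda=\sum_{i=1}^n \int_{\sfX_i} \abs{s_i\circ h_i^{-1}}^p\,\n{(f-g)\circ h_i^{-1}}_\sfF^p\,d\lambda.
\]
Applying the transformation theorem to each summand rewrites the right-hand side as $\sum_{i=1}^n \int_\sfX \abs{\det Dh_i}\,\abs{s_i}^p\,\n{f-g}_\sfF^p\,d\lambda$, which is bounded by $\gamma\int_\sfX\n{f-g}_\sfF^p\,d\lambda$. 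Thus for $1\le p<\infty$ one gets $\n{Tf-Tg}_p\le\gamma^{1/p}\n{f-g}_p$ with $\gamma^{1/p}<1$, while for $0<p<1$ the identical chain reads $d_p(Tf,Tg)\le\gamma\,d_p(f,g)$ with $\gamma<1$; in both regimes $T$ is a contraction.

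The case $p=\infty$ I would treat separately and it is simpler: since each $h_i$ is a diffeomorphism it maps Lebesgue-null sets to null sets in both directions, so composition with $h_i^{-1}$ leaves essential suprema invariant and no Jacobian factor appears. One obtains $\n{Tf-Tg}_\infty\le\max_i\n{s_i}_{L^\infty}\,\n{f-g}_\infty=\gamma\,\n{f-g}_\infty$ with $\gamma=\max_i\n{s_i}_{L^\infty}<1$, reducing to the situation of Theorem \ref{sol}. In every regime the completeness of $L^p(\X,\sfF)$ (a Banach space for $p\ge1$, a complete metric $F$-space for $0<p<1$) then produces a unique fixed point $\psi$, as claimed. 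The step I expect to demand the most care is the validity of the change of variables together with the boundedness of $\abs{\det Dh_i}$ needed for well-definedness of $T0$; once the disjointness of the partition is used to linearize the $p$-th power of the sum, the contraction estimate itself is routine.
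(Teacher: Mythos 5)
Your proposal is correct and follows essentially the same route as the paper: the paper's proof is exactly the change-of-variables contraction estimate you reproduce (carried out before the theorem statement), with the cases $0<p<1$ and $p=\infty$ declared analogous and omitted. If anything, you are more careful than the paper --- you spell out the omitted cases, note the well-definedness of $T$ via the boundedness of $\abs{\det Dh_i}$, and correctly record the contraction factor as $\gamma^{1/p}$ for $1\le p<\infty$ (the paper loosely writes $\gamma$) --- but the underlying argument is identical.
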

\begin{proof}
The proof of contractivity for $T$ in the cases $0< p<1$ and $p=\infty$ proceeds along the same lines as above and is omitted.
\end{proof}
\section{$\cC^\alpha$ Spaces}
In this section, we derive a sufficient condition for the global function $\psi$ to be an element of a $\cC^\alpha$ function space. To this end, let $\sfU$ be a nonempty open subset of $\sfE$ with compact closure $\oU$. Following \cite{A}, we define the vector space $\cC^\alpha(\oU,\sfF)$ to consist of all those functions $f\in \cC^k(\sfU,\sfF)$ for which $D^k f$ is bounded and continuous on $\sfU$ for $k\in \{0,1,\ldots, \alpha\}$ (hence possesses a unique bounded continuous extension to $\oU$). Here, $D^k$ denotes the $k$-fold Fr\'echet derivative of $f$. 

In the following, we denote the closure $\oU$ of $\sfU$ by $\sfX$. The vector space $\cC^\alpha(\sfX,\sfF)$ becomes a Banach space when endowed with the norm
\be\label{Calpha}
\n{f}_{C^\alpha} := \max_{k=0,1,\ldots, \alpha} \n{D^k f}.
\ee
(See also Remark \ref{rem5.1} below.)

We introduce an RB operator $T: \cC^\alpha(\sfX,\sfF)\to \sfF^\sfX$ in the form \eqref{eq3.17} but now insist that the functions $q_i\in \cC^\alpha (\sfX,\sfF)$ and $s_i\in \cC^\alpha(\sfX,\R)$, $i\in \N_n$. Moreover, we require for all $i,j\in \N_n$ with $x_1, x_2\in X$ and all $k\in \{0,1,\ldots, \alpha\}$ the following join-up conditions are satisfied:
\begin{align}\label{Calphajoinup}
\lim_{x\to x_1} & D^k f_j(x) = D^kf_i(x_2)\;\;\Longrightarrow\\
& \lim_{x\to x_1} D^k q_j(x) + D^k(s_j\circ\psi)(x) = D^k q_i(x_2) + D^k(s_i\circ \psi)(x_2).\nonumber
\end{align}
Under these assumptions, $T$ is well-defined and maps $\cC^\alpha(\sfX,\sfF)$ into itself. We observe that Remark \ref{rem3.1} also applies to the above join-up conditions at appropriately defined contact points.

The main issue here will be the derivation of conditions under which the RB operator acting on $\cC^\alpha(\sfX,\sfF)$ is contractive. To this end, it suffices to show contractivity of $T$ for a single $f\in\cC^\alpha(\sfX,\sfF)$. We compute on $\sfX_i$
\begin{align*}
\n{Tf}_{C^\alpha} & = \max_{k=0,1,\ldots, \alpha}\ \max_{i=1,\ldots, n}\ \n{D^k (s_i\circ h_i^{-1}\cdot f\circ h_i^{-1})}\\
&\leq  \max_{k=0,1,\ldots, \alpha}\ \max_{i=1,\ldots, n}\ \sum_{q=0}^k \binom{k}{q} \n{D^{k-q} (s_i\circ h_i^{-1})} \n{D^q ( f\circ h_i^{-1})},
\end{align*}
by the Leibnitz formula in Banach spaces \cite[§1.3]{AR} and the fact that $\cC^\alpha(\sfX,\sfF)$ is a Banach algebra. It remains to estimate the terms $\n{D^q ( f\circ h_i^{-1})}$. 

For this purpose, the application of Fa\`a di Burno's formula in Banach spaces \cite[§1.4]{AR} produces 
\begin{align*}
D^q (f\circ h_i^{-1})(x) &= \sum_{1\leq r\leq q} \sum_{\substack{(i_1, \ldots, i_r)\in \N^r\\ i_1+\cdots i_r = q}} \sigma_q\, D^r f (h_i^{-1}(x))\, D^{i_1} h_i^{-1}(x)\cdot\ldots\cdot D^{i_r} h_i^{-1}(x),
\end{align*}
where the $\sigma_q = \sigma_q (i_1, \ldots, i_r)$ are nonnegative combinatorial constants defined inductively by
\begin{gather*}
\sigma_1 (1) = 1,\\
\sigma_{q+1}(q+1) = 1, \quad\text{and}\quad \sigma_{q+1}(i_1, \ldots, i_r,q+1-r) = \binom{q}{r} \sigma_q(i_1, \ldots, i_r).
\end{gather*}
Hence,
\[
\n{D^q ( f\circ h_i^{-1})} \leq \left(\sum_{1\leq r\leq q} \sum_{\substack{(i_1, \ldots, i_r)\in \N^r\\ i_1+\cdots i_r = q}} \sigma_q\,\n{D^{i_1} h_i^{-1}(x)\cdot\ldots\cdot D^{i_r} h_i^{-1}}\right)\n{f}_{C^\alpha}
\]
Combining the above expressions yields
\begin{align*}
\n{Tf}_{C^\alpha} &\leq \max_{k=0,1,\ldots, \alpha}\ \max_{i=1,\ldots, n}\ \sum_{0\leq q\leq k} \sum_{1\leq r\leq q} \sum_{\substack{(i_1, \ldots, i_r)\in \N^r\\ i_1+\cdots i_r = q}}  \binom{k}{q} \sigma_q \n{D^{k-q} (s_i\circ h_i^{-1})} \\
& \qquad\qquad\times\n{D^{i_1} h_i^{-1}(x)\cdot\ldots\cdot D^{i_r} h_i^{-1}}\,\n{f}_{C^\alpha}
\end{align*}
Thus, $T$ is a contraction on $\cC^\alpha(\sfX,\sfF)$ provided that
\be\label{gamma}
\gamma := \max_{k=0,1,\ldots, \alpha}\ \max_{i=1,\ldots, n}\ \sum_{0\leq q\leq k} \sum_{1\leq r\leq q} \sum_{\substack{(i_1, \ldots, i_r)\in \N^r\\ i_1+\cdots i_r = q}}  \binom{k}{q} \sigma_q\, \gamma_s (i,k,q)\, \gamma_h (i,r) < 1,
\ee
where for simplicity we set
\[
\gamma_s (i, k, q) := \n{D^{k-q} (s_i\circ h_i^{-1})}
\]
and
\[
\gamma_h (i, r) := \n{D^{i_1} h_i^{-1}(x)\cdot\ldots\cdot D^{i_r} h_i^{-1}}.
\]
\begin{theorem}\label{th5.1}
Let $\gamma$ be defined as in \eqref{gamma}. Assume that $q_i\in \cC^\alpha (\sfX,\sfF)$ and $s_i\in \cC^\alpha(\sfX,\R)$, $i\in \N_n$, and the join-up conditions \eqref{Calphajoinup} are satisfied. Then, the fractal interpolation problem \eqref{psieq} has a unique solution in $\cC^\alpha (\sfX,\sfF)$ provided that $\gamma < 1$.
\end{theorem}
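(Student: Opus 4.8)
The plan is to recognize that the displayed computation preceding the statement already carries out the analytic heart of the argument, and to assemble the remaining pieces into a single application of the Banach Fixed Point Theorem. First I would verify that $T$ genuinely maps $\cC^\alpha(\sfX,\sfF)$ into itself. On each piece $\sfX_i = h_i(\sfX)$ the function $Tf$ is a product and composition of $\cC^\alpha$ maps --- namely $q_i$, $s_i$, $f$, and the $C^\alpha$-diffeomorphism $h_i^{-1}$ --- and hence lies in $\cC^\alpha(\sfX_i,\sfF)$ with all Fr\'echet derivatives up to order $\alpha$ bounded, the latter because $\sfX = \oU$ is compact. The join-up conditions \eqref{Calphajoinup}, imposed for every $k\in\{0,1,\ldots,\alpha\}$, are exactly what is needed to guarantee that these pieces glue across the contact points to a globally $\cC^\alpha$ function on $\sfX$ in the sense of Remark \ref{rem3.1}; this is the well-definedness assertion made in the text.

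Next I would exploit that $T$ is affine. Writing $T = T(0) + \wh{T}$, where $\wh{T}$ is the linear operator $\wh{T} f(x) := (s_i\circ h_i^{-1})(x)\cdot(f\circ h_i^{-1})(x)$ on $\sfX_i$, one has $Tf - Tg = \wh{T}(f-g)$, so contractivity of $T$ is equivalent to the operator-norm bound $\n{\wh{T}}\le\gamma$. This is precisely why the text notes that ``it suffices to show contractivity of $T$ for a single $f$'': the estimate carried out there, combining the Leibniz formula and Fa\`a di Bruno's formula in Banach spaces \cite[\S1.3, \S1.4]{AR} together with the Banach-algebra property of $\cC^\alpha(\sfX,\sfF)$, bounds $\n{\wh{T} f}_{C^\alpha}$ by $\gamma\,\n{f}_{C^\alpha}$ with $\gamma$ as in \eqref{gamma}. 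I would simply invoke that chain of inequalities to conclude $\n{Tf - Tg}_{C^\alpha}\le\gamma\,\n{f-g}_{C^\alpha}$ for all $f,g\in\cC^\alpha(\sfX,\sfF)$.

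With $\gamma < 1$ by hypothesis, $T$ is a contraction on the Banach space $(\cC^\alpha(\sfX,\sfF),\n{\cdot}_{C^\alpha})$ of \eqref{Calpha}. The Banach Fixed Point Theorem then yields a unique $\psi\in\cC^\alpha(\sfX,\sfF)$ with $T\psi=\psi$; unravelling the definition \eqref{eq3.17} of $T$ on each $\sfX_i$ and composing with $h_i$ shows that $\psi$ is exactly the unique $\cC^\alpha$ solution of the fractal interpolation problem \eqref{psieq}.

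I expect the main obstacle to lie in the closure step rather than in the contraction estimate. Bounding $\n{\wh{T} f}_{C^\alpha}$ is a mechanical, if lengthy, application of the two differentiation formulas and is already displayed above; the delicate points are confirming that every factor $\gamma_h(i,r)=\n{D^{i_1}h_i^{-1}\cdots D^{i_r}h_i^{-1}}$ is bounded (so that $\gamma$ is finite to begin with, which follows from $h_i\in\Diff^\alpha$ and compactness of $\sfX$), and that the derivative matching in \eqref{Calphajoinup} genuinely produces $C^\alpha$ regularity across the contact points \eqref{contact}. Once these regularity matters are settled, the fixed-point argument is immediate.
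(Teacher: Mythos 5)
Your proposal is correct and follows essentially the same route as the paper: the paper's own ``proof'' is precisely the derivation preceding the theorem (well-definedness of $T$ on $\cC^\alpha(\sfX,\sfF)$ via the join-up conditions \eqref{Calphajoinup}, the Leibniz/Fa\`a di Bruno estimate yielding the constant $\gamma$ of \eqref{gamma}, and then the Banach Fixed Point Theorem). Your only addition is to make explicit the affine decomposition $T = T(0) + \wh{T}$ that justifies the paper's remark that contractivity need only be checked on a single $f$, which is a faithful and slightly more careful articulation of the same argument.
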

\begin{remark}\label{rem5.1}
Using an equivalent norm to define the norm topology on $\cC^\alpha(\sfX,\sfF)$ such as 
\[
\n{f}_{C^\alpha} := \sum_{k=0}^\alpha \n{D^k f}
\]
does not invalidate the result stated in Theorem \ref{th5.1}. This follows directly from Lemmas \ref{lem1} and \ref{lem2} below.
\begin{lemma}\label{lem1}
Suppose $\n{\cdot}_1$ and $\n{\cdot}_2$ are equivalent norms on a Banach space $\sfE$, i.e., there exist positive constants $c_1\leq c_2$ such that
\[
c_1 \n{\cdot}_1 \leq \n{\cdot}_2 \leq c_2\n{\cdot}_1.
\]
It $T$ is a contraction on $\sfE$ with respect to $\n{\cdot}_1$ then there exists an $m_0\in \N$ such that $T^{m_0}$ is a contraction with respect to $\n{\cdot}_2$.
\end{lemma}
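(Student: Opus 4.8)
The plan is to iterate the contraction in the first norm and then transfer the resulting estimate to the second norm using the two equivalence inequalities. First I would fix a contraction constant $L \in [0,1)$ for $T$ with respect to $\n{\cdot}_1$, so that $\n{Tx - Ty}_1 \leq L\,\n{x - y}_1$ for all $x, y \in \sfE$. A routine induction on $m$ then shows that the iterate $T^m$ satisfies $\n{T^m x - T^m y}_1 \leq L^m\,\n{x - y}_1$ for every $m \in \N$.

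Next I would convert this first-norm estimate into a second-norm estimate. Applying the upper equivalence bound at the outermost step, the iterated contraction estimate in the middle, and the lower equivalence bound (rewritten as $\n{\cdot}_1 \leq c_1^{-1}\,\n{\cdot}_2$) at the end gives
\[
\n{T^m x - T^m y}_2 \;\leq\; c_2\,\n{T^m x - T^m y}_1 \;\leq\; c_2\, L^m\,\n{x - y}_1 \;\leq\; \frac{c_2}{c_1}\, L^m\,\n{x - y}_2.
\]
Hence $T^m$ is Lipschitz with respect to $\n{\cdot}_2$ with constant at most $(c_2/c_1)\,L^m$.

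Finally, because $L < 1$ forces $L^m \to 0$ as $m \to \infty$, there exists an $m_0 \in \N$ for which $(c_2/c_1)\,L^{m_0} < 1$; concretely, any $m_0 > \log(c_2/c_1)/\log(1/L)$ suffices. For such an $m_0$ the map $T^{m_0}$ is a contraction on $(\sfE, \n{\cdot}_2)$, which is the assertion. I do not expect a genuine obstacle here: the only points demanding attention are orienting the two equivalence inequalities correctly (using $c_2$ when estimating $\n{\cdot}_2$ from above by $\n{\cdot}_1$ and $c_1$ when bounding $\n{\cdot}_1$ by $\n{\cdot}_2$), and recognizing that one must pass to an \emph{iterate} rather than to $T$ itself, since the one-step constant $(c_2/c_1)\,L$ may well exceed $1$ when the ratio $c_2/c_1$ is large.
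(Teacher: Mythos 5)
Your proof is correct and follows essentially the same route as the paper: transfer the iterated contraction estimate between the two norms, picking up the factor $c_2/c_1$, and then choose $m_0$ large enough that $(c_2/c_1)L^{m_0} < 1$. The only cosmetic difference is that you use the standard two-point contraction inequality $\n{T^m x - T^m y}_1 \leq L^m \n{x-y}_1$, whereas the paper phrases the estimate in the one-point form $\n{Te}_1 \leq \gamma \n{e}_1$ consistent with its definition of the Lipschitz constant; the structure of the argument is identical.
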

\begin{proof}
Suppose that there exists a $\gamma\in [0,1)$ such that
\[
\n{Te}_1 \leq \gamma\n{e}_1, \quad\forall\,e\in \sfE.
\]
Then
\begin{align*}
\n{Te}_2 &\leq c_2 \n{Te}_1 \leq c_2 \gamma \n{e}_1 \leq \left(\frac{c_2}{c_1}\,\gamma\right)\n{e}_2.
\end{align*}
As $\n{T^m f}_1 \leq \gamma^m\n{e}_1$, for all $m\in \N$, there exists an $m_0\in\N$ so that $\frac{c_2}{c_1}\,\gamma^{m_0} < 1$.
\end{proof}
\begin{lemma}\label{lem2}
Suppose that for some $m\in \N$, the mapping $T^m:\sfE\to\sfE$ is a contraction and thus has a unique fixed point $e^*$. Then $e^*$ is also the unique fixed point of $T$.
\end{lemma}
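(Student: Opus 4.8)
The plan is to run the standard ``power contraction'' argument, which splits cleanly into an existence step and a uniqueness step. For existence, I would start from the hypothesis $T^m e^* = e^*$ and apply $T$ to both sides. Since $T$ commutes with its own power, $T\,T^m = T^{m+1} = T^m\,T$, this yields
\[
T^m (T e^*) = T (T^m e^*) = T e^*,
\]
so that $T e^*$ is \emph{itself} a fixed point of $T^m$. But $T^m$ is a contraction on the Banach space $\sfE$, and by the Banach Fixed Point Theorem its fixed point is unique; since $e^*$ is that fixed point, we are forced to conclude $T e^* = e^*$. Thus $e^*$ is a fixed point of $T$.

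For uniqueness, I would take an arbitrary fixed point $e'$ of $T$, i.e. $T e' = e'$, and iterate: applying $T$ repeatedly gives $T^k e' = e'$ for every $k\in\N$, in particular $T^m e' = e'$. Hence $e'$ is a fixed point of $T^m$, and again by the uniqueness of the fixed point of the contraction $T^m$ we obtain $e' = e^*$. Combining the two steps, $e^*$ is the one and only fixed point of $T$.

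The only point requiring any care is the commutativity observation $T\,T^m = T^m\,T$ underpinning the existence step; but this is immediate because both sides equal $T^{m+1}$, so there is no genuine obstacle here. It is worth emphasizing that completeness of $\sfE$ enters only through the already-invoked existence and uniqueness of the fixed point of $T^m$: the argument above does not require $T$ itself to be a contraction, which is precisely the usefulness of the lemma in combination with Lemma \ref{lem1}.
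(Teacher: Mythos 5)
Your proof is correct. The paper itself does not prove this lemma at all: its ``proof'' is a one-line citation to \cite[Corollary 1.3]{P}, so you have supplied the self-contained argument that the paper delegates to the literature. Your argument is the standard one (and, in substance, the one in the cited reference): since $T$ commutes with $T^m$, the point $Te^*$ is a fixed point of $T^m$, and uniqueness of the fixed point of the contraction $T^m$ forces $Te^*=e^*$; conversely any fixed point of $T$ is a fixed point of $T^m$ and hence equals $e^*$. Both steps are sound, and your closing remarks are accurate and worth making explicit: only the \emph{uniqueness} of the fixed point of $T^m$ is used (so completeness enters solely through the hypothesized existence of $e^*$), and $T$ itself is never assumed to be a contraction --- which is exactly what makes the lemma useful in tandem with Lemma \ref{lem1}, where only some power $T^{m_0}$ is shown to contract in the equivalent norm.
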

\begin{proof}
See, for instance, \cite[Corollary 1.3]{P}.
\end{proof}
\end{remark}
\begin{example}
Take $\sfX:=[0,1]\subset\R =: \sfE$ and $\sfF := \R$ and consider $\alpha:= 1$. Furthermore, let
\begin{gather*}
h_1 (x) := \tfrac14 x(x+1)\quad\text{and}\quad h_2(x) := \tfrac14(2+x+x^2),\\
s_1(x) := \tfrac15(1-x)\quad\text{and}\quad s_2(x):= \tfrac15 x,\\
q_1(x) := x\quad\text{and}\quad q_2(x):= 1+\tfrac{3}{13}x.
\end{gather*}
One easily verifies that the requirements on $h_i$, $s_i$, and $q_i$, as well as the join-up conditions are satisfied, and that $\gamma = \frac45 < 1$. Hence, the solution of the fractal interpolation problem \eqref{psieq} in the function space $\cC([0,1],\R)$ is an element of the function space. Figure \ref{fig4} below displays the graph of $\psi$.
\begin{figure}[h!]
\begin{center}
\includegraphics[width= 5cm, height = 4cm]{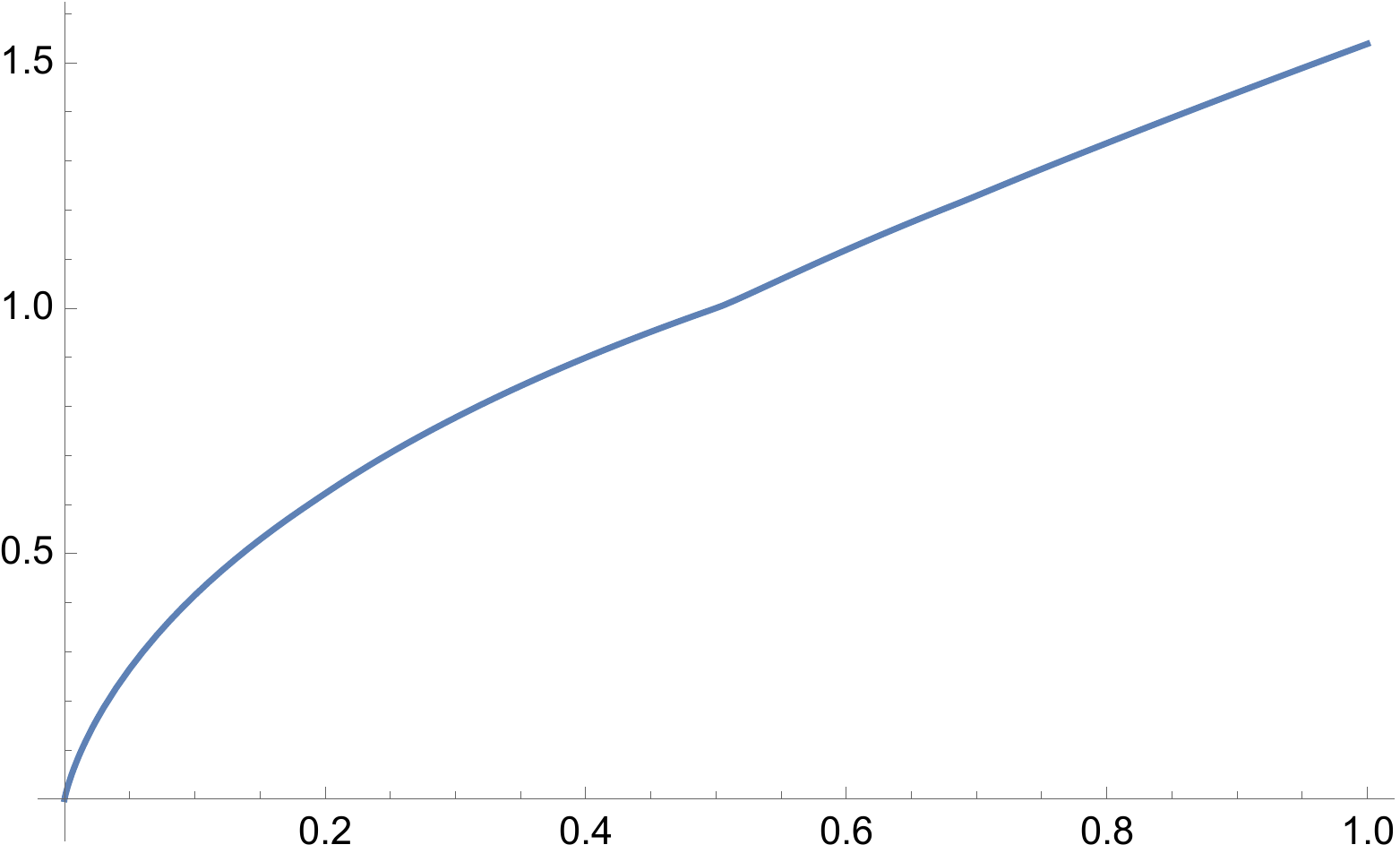}
\caption{The graph of a differentiable fractal function.}\label{fig4}
\end{center}
\end{figure}
\end{example}

\end{document}